\documentclass[11pt]{article}

\usepackage{amssymb,amsmath,euscript}
\usepackage{amsfonts}
\usepackage[authoryear]{natbib}

\usepackage{verbatim}

\def\@enum@{\list{\csname label\@enumctr\endcsname}%
           {\usecounter{\@enumctr}\def\makelabel##1{\hss\llap{##1}}
           \itemsep=2pt\parsep=0pt\topsep=3pt plus 1pt minus 1 pt}}
\usepackage{paralist}
\usepackage{soul,color}

\setlength{\oddsidemargin}{.1in}
\setlength{\evensidemargin}{.1in}
\setlength{\textwidth}{6.2in}
\setlength{\textheight}{8.6in}
\setlength{\topmargin}{-0.5in}
\setlength{\footskip}{1cm}

\newtheorem{proposition}{Proposition}[section]
\newtheorem{lemma}[proposition]{Lemma}
\newtheorem{theorem}[proposition]{Theorem}

\newtheorem{cor}[proposition]{Corollary}

\renewcommand{\d}[1]{|| #1 ||_\infty}

\def\a{\alpha}

\def\de{\delta}
\def\ep{\varepsilon}

\def\o{\omega}
\def\q{\quad}
\def\dim{{\rm dim}_{_{\rm H}}}
\def\dimp{{\rm dim}_{_{\rm P}}}

\def\vep{\varepsilon}

  \def\sF {{\cal F}}

\def\ESC {\EuScript C}

\def\bP {{\mathbb P}}

\def\I{{\rm 1 \hskip-2.9truept l}}

\newcommand{\wh}{\widehat}

\def\R{{\mathbb R}}

\def\PP{{\mathbb P}}
\def\E{{\mathrm E}}
\def\P{{\mathrm P}}
\def\Z{{\mathbb Z}}

%
%
%%%%%%%%%%%%%%%%%%%%%%%%%%%%%%%%%%%%%%%%%%%%%%%%%%%%%%%%%%%%
%
%   Reset equation numbers at the end of every section
%
\makeatletter \@addtoreset{equation}{section} \makeatother
%
%   Each section has its own numbering
%
\renewcommand{\theequation}{\arabic{section}.\arabic{equation}}
%
%%%%%%%%%%%%%%%%%%%%%%%%%%%%%%%%%%%%%%%%%%%%%%%%%%%%%%%%%%%%
%
%   Theorem-Like environments
%
%\newtheorem{theorem}{Theorem}[section]
%\newtheorem{lemma}[theorem]{Lemma}
%\newtheorem{proposition}[theorem]{Proposition}
%\newtheorem{corollary}[theorem]{Corollary}
%
%
%%%%%%%%%%%%%%%%%%%%%%%%%%%%%%%%%%%%%%%%%%%%%%%%%%%%%%%%%%%%
\newcommand {\qed}%
{%
    {}\hfill
    {}\hfill
    {$\square $}%
    \vspace {0.3cm}%
    \pagebreak [2]%
    \par
}%
%
%%%%%%%%%%%%%%%%%%%%%%%%%%%%%%%%%%%%%%%%%%%%%%%%%%%%%%%%%%%%
%
%   The ''proof'' environment:
%
%   The proper usage is
%
%       \begin{proof}{}
%       ...
%       \end{proof}
%   or,
%
%       \begin{proof}{of Theorem X}
%       ...
%       \end{proof}
%
\newenvironment{proof}[1]{%
    \vspace{0.3cm}%
    \pagebreak [2]%
    \par%
    \noindent {\bf  Proof~#1\ }}{\qed}%
%%%%%%%%%%%%%%%%%%%%%%%%%%%%%%%%%%%%%%%%%%%%%%%%%%%%%%%%%%%%%
%
%   The ''example'' counter and environment
%
%
%
%
%
%
%%%%%%%%%%%%%%%%%%%%%%%%%%%%%%%%%%%%%%%%%%%%%%%%%%%%%%%%%%%%
%
%   The ''remark'' counter and environment
%
\newenvironment{remark}{%
    \vspace{0.3cm} \pagebreak [2]%
    \par%
    \refstepcounter{proposition}
    \noindent%
    {\bf Remark~\theproposition\  }}{\qed}%
%
%%%%%%%%%%%%%%%%%%%%%%%%%%%%%%%%%%%%%%%%%%%%%%%%%%%%%%%%%%%%
%%%%%%%%%%%%%%%%%%%%%%%%%%%%%%%%%%%%%%%%%%%%%%%%%%%%%%%%%%%%%
%
%   The ''remark'' counter and environment
%
%
%
%%%%%%%%%%%%%%%%%%%%%%%%%%%%%%%%%%%%%%%%%%%%%%%%%%%%%%%%%%%%
%
%

\begin{document}

\title{Discrete Fractal Dimensions of the Ranges of Random Walks in $\Z^d$
Associate with Random Conductances}
\author{Yimin Xiao\footnote{Research partially
        supported by NSF grant DMS-1006903}\\
Michigan State University\\
\and Xinghua Zheng \footnote{Research partially
        supported by GRF 606010 of the HKSAR.}
\\ Hong Kong University of Science and Technology\\}

\maketitle
\begin{abstract}
Let $X= \{X_t, t \ge 0\}$ be a continuous time random walk in
an environment of \hbox{i.i.d.} random conductances $\{\mu_e \in [1, \infty), e
\in E_d\}$, where $E_d$ is the set of nonoriented nearest neighbor bonds on
the Euclidean lattice $\mathbb{Z}^d$ and $d\ge 3$.
Let ${\mathrm R} = \{x \in \mathbb{Z}^d: X_t = x \hbox{ for some } t \ge 0\}$ be
the range of $X$. It is proved that, for almost every realization of the
environment, ${\rm dim}_{_{\rm H}} {\mathrm R} = {\rm dim}_{_{\rm P}} {\mathrm R} = 2$ almost surely,
where ${\rm dim}_{_{\rm H}}$ and ${\rm dim}_{_{\rm P}}$ denote respectively the discrete Hausdorff and packing
dimension. Furthermore, given
any
set $A \subseteq \mathbb{Z}^d$, a criterion for $A$
to be hit by $X_t$ for  arbitrarily large $t>0$ is given in terms of ${\rm dim}_{_{\rm H}} A$.
Similar results for Bouchoud's trap model in $\mathbb{Z}^d$ ($d \ge 3$)
are also proven.
\end{abstract}

{\sc Running head}:  Fractal Dimensions of the Range of the Random
Conductance Model\\

{\sc 2010 AMS Classification numbers}: 60K37, 60F17, 82C41, 31C20.\\

{\sc Key words:}  Random conductance model, Bouchoud's trap model, range,
discrete Hausdorff dimension, discrete packing dimension, transience.

\section{Introduction}
\label{sec:Intro}

Ordinary fractal dimensions such as Hausdorff dimension and packing dimension
are useful tools not only for analyzing the (microscopic) geometric structures
of various thin sets and measures in the Euclidean space $\R^d$, but also for
many scientific applications; see Falconer \cite{Fal90} for a systematic account.
In probability theory, they have been applied to study fine properties of the
sample paths of Brownian motion, L\'evy processes and random fields. We refer
to  Taylor \cite{Taylor86},  Xiao \cite{X04, X09} for more information. Many
discrete sets, such as percolation clusters, also exhibit (macroscopic or
global) fractal phenomena. In order to investigate their geometric structures,
Barlow and Taylor \cite{BT89,BT92} have introduced the notions of discrete
Hausdorff and packing dimensions and used them to study the fractal properties of
strictly stable random walks. See also Khoshnevisan \cite{Kh94}.

In this paper, we apply discrete Hausdorff and packing dimensions of
Barlow and Taylor \cite{BT92} to describe the range of a class of random walks in random
environment, namely the random conductance models (RCM) on
the Euclidean lattice $\Z^d$ considered by Barlow and Deuschel \cite{BD10},
among others.

More specifically, for $x,y\in \Z^d$, we say that $x\sim y$ if $x$ and $y$ are neighboring sites,
(i.e., $|x-y|=1$, where $|\cdot|$ is the Euclidean distance) and $x\not\sim y$ otherwise.
Let $E_d$ be the set of nonoriented nearest neighbor bonds,
i.e., $E_d  =\{e=(x,y):\ x\sim y\}$, and
let  $\{\mu_e, e \in E_d\}$ be a sequence of
non-negative \hbox{i.i.d.} random variables with values in $[1, \infty)$, defined on a
probability space $(\Omega, \PP)$. We may take $\Omega = [1, \infty)^{E_d}$, the
set of configurations of conductances, and let $\PP$ be the product probability
measure on $\Omega$ under which the coordinates $\mu_e, e \in E_d$, are
\hbox{i.i.d.} random variables.

We write $\mu_{xy} =\mu_{(x,y)} = \mu_{yx}$, let $\mu_{xy} = 0$ if
$x \nsim y$ and set $\mu_x = \sum_y \mu_{xy}$. There are two natural continuous
time random walks on $\Z^d$ associated with $\{\mu_e, e \in E_d\}$.
Both jump from $x$ to $y \sim x$ with probability $P(x, y) = \mu_{xy}/\mu_x$.
The first (the variable speed random walk or ${\mathrm {VSRW}}$) waits at
$x$ for an exponential time with mean $1/\mu_x$ while the second (the constant
speed random walk or ${\mathrm {CSRW}}$) waits at $x$ for an exponential time
with mean 1. Their generators ${\cal L}_{V}$ and ${\cal L}_{C}$ are given by
\begin{equation}\label{Def:generator}
\begin{split}
   {\cal L}_{V}(\o)f(x) &= \sum_y\mu_{xy}(\o)\big(f(x) - f(y)\big),\q \mbox{and}\\
   {\cal L}_{C}(\o)f(x) &= \mu_x(\o)^{-1} \sum_y\mu_{xy}(\o)\big(f(x) - f(y)\big),
   \end{split}
\end{equation}
respectively. VSRW is reversible with
stationary measure
$\nu$ defined by
$\nu(\{x\}) = 1$, $x \in \Z^d$; and CSRW is reversible with $\mu_x, x \in \Z^d$
as its
stationary
measure. Since the generators ${\cal L}_{V}$ and ${\cal L}_{C}$
only differ by a multiple, ${\mathrm {VSRW}}$ and ${\mathrm{CSRW}}$ are time
change of each other; see Barlow and Deuschel \cite[pp.39-40]{BD10}
for precise information.

The random conductance model has been studied by several authors
under various restrictions on the law of $\mu_e$. There are three
typical cases: $c^{-1}\le \mu_e \le c$ for some $c\ge 1$ (strong ellipticity),
$0\le \mu_e \le 1$, and $1 \le \mu_e < \infty$. An important example of
the random conductance models is a continuous time simple
random walk on a supercritical percolation cluster ${\cal C}_\infty$ in~$\Z^d$.
In this case $\{\mu_e, e \in E_d\}$ are \hbox{i.i.d.} Bernoulli random variables with
mean $p>p_c(d)$, the critical probability for bond percolation in~$\Z^d$. See
Barlow \cite{B04}, Berger et al. \cite{BeB08}, Biskup and Prescott \cite{BP07},
Mathieu \cite{M08} and the references therein for further information.

Under the assumptions that $d \ge 2$ and $\PP(\mu_e \ge 1)= 1$, Barlow and Deuschel
\cite{BD10} prove that the VSRW satisfies a quenched functional central
limit theorem and the limiting process is $\sigma_V B$,
where $\sigma_V>0$ is a nonrandom constant and $B$ is a Brownian motion
on $\R^d$. As shown by Barlow and \u Cern\' y \cite{BC11},
Barlow and Zheng \cite{BZ10} and  \u Cern\' y  \cite{C11}, the scaling limit of
${\mathrm {CSRW}}$ in $\Z^d$
with $d \ge 2$ in the heavy-tailed environment can either be a Brownian motion or a
fractional-kinetics process (which is a Brownian motion time-changed by the inverse of
a stable subordinator). Random conductance models under the general conditions
$\PP\big(0 \le \mu_e < \infty \big) =1$ and $\PP\big(\mu_e >0 \big) > p_c(d)$
have been recently investigated by Andres et al. \cite{ABDH10}.
%%%
%For simplicity
In this paper
%%%
we focus on the case $\PP(\mu_e \ge 1)= 1$.

This paper is concerned with fractal properties of the ranges of random
conductance models. Since the time change which relates CSRW and VSRW is
strictly increasing and continuous,
${\mathrm {VSRW}}$ and ${\mathrm {CSRW}}$ have the
same range.
Hence, in the following, we consider ${\mathrm {VSRW}}$ in $\Z^d$
and denote it by $X$.
Also, for any environment $\{\mu_e(\o), e \in E_d\}$ and any $x\in \Z^d$, we write $\P^{\o}_x$
for the (quenched) law of $X$ started at $x$.

Let
\[
{\mathrm R} = \{x \in \Z^d: \, X_t = x \hbox{ for some } t \ge 0\}
\]
be the range of VSRW $X$ in $\Z^d$. It follows from Theorem 1.2 of Barlow and Deuschel
\cite{BD10} that when $d\geq 2$ $X$ is transient if and only if $d \ge 3$.
Hence for $d =2$, $X$ is recurrent and R~=~$\Z^2$ $\P^\o_0$-a.s.
The case of $d=1$ is similar because
by, for example, Lemma 1.5 of Solomon \cite{Solomon75}, the range is almost
surely the whole line. We shall henceforth assume that $d\geq 3$.

The following are our main theorems, which describe the fractal structures of R
and characterize the transient sets for $X$ by using the discrete Hausdorff and
packing dimensions defined by Barlow and Taylor \cite{BT89, BT92}.

\begin{theorem}\label{Th:main}
Assume that $d \ge 3$ and $\PP(\mu_e \ge 1)= 1$. Then for $\PP$-almost every
$\o \in \Omega$,
\[
\dim {\mathrm R} = \dimp {\mathrm R} = 2, \quad \P^\o_0\hbox{-a.s.},
\]
where $\dim$ and $\dimp$ denote respectively the discrete Hausdorff and packing
dimension.
\end{theorem}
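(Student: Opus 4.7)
The plan is to establish $\dim \mathrm R \ge 2$ and $\dimp \mathrm R \le 2$ separately and combine them with the general bound $\dim \le \dimp$. The central analytic input is the quenched two-sided Green's function estimate for the VSRW, which under the hypothesis $\PP(\mu_e \ge 1) = 1$ is a standard consequence of the Barlow--Deuschel Gaussian heat kernel bounds: for $\PP$-a.e.\ $\omega$ there exist an a.s.\ finite random threshold $N_0(\omega)$ and deterministic constants $c_1, c_2 > 0$ with
\begin{equation*}
c_1 |y|^{2-d} \le g^\omega(0, y) \le c_2 |y|^{2-d} \quad \text{for all } |y| \ge N_0(\omega),
\end{equation*}
and uniform control of $g^\omega(y,y)$ away from $0$ and $\infty$. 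These will play the same role as the classical $\asymp |y|^{2-d}$ estimate for simple random walk does in Barlow--Taylor \cite{BT92}.

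For the upper bound $\dimp \mathrm R \le 2$, a first-moment computation suffices: writing $\P^\omega_0(y \in \mathrm R) = g^\omega(0,y)/g^\omega(y,y)$ and applying the upper Green's function bound yields
\begin{equation*}
\E^\omega_0 \bigl[ \bigl| \mathrm R \cap S_n \bigr| \bigr] = \sum_{y \in S_n} \frac{g^\omega(0, y)}{g^\omega(y, y)} \le C \, 2^{2n}
\end{equation*}
for all large $n$, where $S_n = \{y \in \Z^d : \|y\|_\infty \le 2^n\}$. Chebyshev plus Borel--Cantelli gives $|\mathrm R \cap S_n| \le 2^{(2+\eps)n}$ for all sufficiently large $n$, $\P^\omega_0$-a.s., and the counting characterization of discrete packing dimension from Barlow--Taylor \cite{BT92} then delivers $\dimp \mathrm R \le 2$.

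For the lower bound $\dim \mathrm R \ge 2$, I would fix $s \in (0,2)$ and work on the annulus $A_n = S_{n+1} \setminus S_n$. The lower Green's function estimate gives $\E^\omega_0[|\mathrm R \cap A_n|] \ge c \, 2^{2n}$. For the second moment, a strong Markov decomposition based on which of $y, z \in A_n$ is visited first yields the two-point bound $\P^\omega_0(y, z \in \mathrm R) \le C \, 2^{-(d-2)n} |y - z|^{2-d}$ for $y \ne z$. Summing this bound, weighted by $|y-z|^{-s}$, shows the expected $s$-energy of the counting measure on $\mathrm R \cap A_n$ is $O(2^{(4-s)n}) = o(2^{4n}) = o((\E^\omega_0|\mathrm R \cap A_n|)^2)$. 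A Paley--Zygmund argument then produces, with $\P^\omega_0$-probability bounded below uniformly in $n$, a probability measure supported on $\mathrm R \cap A_n \subseteq S_{n+1}$ whose $s$-energy is $\le C$. By the discrete Frostman lemma of Barlow--Taylor, this forces $\dim \mathrm R \ge s$ with positive $\P^\omega_0$-probability. A 0-1 law for $X$ (using that $\dim \mathrm R$ is insensitive to any finite initial segment of the walk) upgrades this to $\P^\omega_0$-a.s., and ergodicity of $\PP$ under $\Z^d$ translations promotes the statement to $\PP$-a.e.\ $\omega$. Letting $s \uparrow 2$ completes the proof.

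I expect the principal obstacle to be uniformity: the quenched Green's function bounds only hold beyond the random spatial threshold $N_0(\omega)$, so care is needed so that the implicit constants in both moment computations do not deteriorate as $n \to \infty$ or with the choice of pair $(y, z) \in A_n$. A secondary technical point is the 0-1 upgrade in the lower bound, which requires identifying a tail $\sigma$-algebra of the walk, at fixed environment, on which $\{\dim \mathrm R \ge s\}$ is measurable, and then invoking ergodicity of $\PP$ to handle the residual environment randomness.
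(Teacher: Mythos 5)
Your upper bound has a genuine gap: a cardinality bound on $\mathrm R\cap S_n$ controls the discrete \emph{Hausdorff} dimension (cover by singletons to get $\nu_\alpha(\mathrm R,S_n)\le \#(\mathrm R\cap S_n)2^{-n\alpha}$), but it does not control the discrete \emph{packing} dimension, which is what the theorem asserts. In the Barlow--Taylor definition, $\tau_\alpha(A,S_n,\ep)$ is a maximum over packings by cubes $V(x_i,r_i)$ of \emph{all} admissible sides $1\le r_i\le 2^{(1-\ep)n}$, and a single point of $A$ at the centre of a large cube contributes $(r_i/2^n)^\alpha$ no matter how few points of $A$ lie near it. Concretely, a set with one point in each subcube of side $2^{(1-\ep)n}$ of $S_n$ has $\#(A\cap S_n)\approx 2^{\ep nd}\ll 2^{2n}$ for small $\ep$, yet $\tau_\alpha(A,S_n,\ep')\approx 2^{\ep n(d-\alpha)}$ for $\ep'<\ep$, so $A$ fails to be $r^\alpha$-packing finite for every $\alpha<d$ and $\dimp A=d$. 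Thus there is no ``counting characterization'' of $\dimp$ that your first-moment bound can feed into. This is exactly why the paper instead bounds, for every dyadic order $k$ with $c\log n\le k\le n$, the expected number $M_k$ of semi-dyadic cubes of side $2^k$ in $S_n$ hit by $X$; the needed input is the hitting-probability estimate $\P^\o_x(X \mbox{ hits } C(y,r))\asymp (r/|x-y|)^{d-2}$ for cubes of all scales $r\ge n^{3d/2}$ (Lemma \ref{Lem:Hit}), which in turn rests on two-sided Green's function sums over cubes, not just the point estimate $g^\o(0,y)/g^\o(y,y)$. Your argument as written proves $\dim\mathrm R\le 2$ but leaves $\dimp\mathrm R\le 2$ unproved.

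Your lower bound takes a genuinely different route from the paper and is essentially viable, with one step you elide. The paper works at the critical exponent $h(r)=r^2$: it bounds sojourn times in dyadic cubes to get $\mu(Q_k(x))\le cn2^{2k}$ (losing a factor $n$), so each good shell only contributes $\nu_2\ge c/n$, and it must then manufacture a \emph{positive density} of good shells along $n_k=\lfloor\lambda k\log k\rfloor$ via stopping times, a maximal inequality, and the SLLN for dependent events (Lemma \ref{Lem:SLLN}). You work at subcritical $s<2$ with an energy/second-moment method, where each good shell contributes $\nu_s(\mathrm R,S_n)\ge c>0$; then it suffices that good shells occur \emph{infinitely often} with positive probability, which follows from reverse Fatou applied to $\P^\o_0(E_n)\ge p$ --- but you must say this: a single shell carrying a bounded-energy measure gives no information about $\dim\mathrm R$, since $m_s(\mathrm R)=\sum_n\nu_s(\mathrm R,S_n)$ and the dimension is insensitive to any finite collection of shells. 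With that step added, the $0$--$1$ law (proved exactly as in Lemma \ref{01_law}, since $\{\dim\mathrm R\ge s\}$ is shift-invariant) upgrades to $\P^\o_0$-a.s., and $s\uparrow 2$ finishes. You also need the paper's care with the random thresholds $U_x$: the Green's function bounds $g^\o(y,z)\asymp|y-z|^{2-d}$ only hold for $|y-z|\ge U_y\wedge U_z$, so the near-diagonal part of the energy sum must be handled with the uniform bound $g^\o\le c_{10}$ and the a.s.\ estimate $\max_{\d{x}\le 2^n}U_x\le c_{11}n^3$; this costs only polylogarithmic factors and does not damage the argument. Your appeal to ergodicity of $\PP$ is not the right mechanism for the final quenched statement; what is actually used is that the environment estimates hold for $\PP$-a.e.\ $\o$ by Borel--Cantelli.
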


\begin{theorem}\label{Th:main2}
Assume that $d \ge 3$ and $\PP(\mu_e \ge 1)= 1$. Let $A \subset \Z^d$
be any (infinite)
set.
Then for $\PP$-almost every $\o \in \Omega$, the following statements hold.
\begin{itemize}
\item[(i)]\, If $\dim A < d-2$, then
\begin{equation*}\label{Eq:wiener1}
\P^\o_0\big(X_t \in A\ \hbox{ for  arbitrarily large } t >0\big) = 0.
\end{equation*}
\item[(ii)]\, If $\dim A > d-2$, then
\begin{equation*}\label{Eq:wiener2}
\P^\o_0\big(X_t \in A\ \hbox{ for  arbitrarily large } t >0\big)
=1.
\end{equation*}
\end{itemize}
\end{theorem}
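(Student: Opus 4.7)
\medskip

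\noindent \textbf{Proof proposal for Theorem~\ref{Th:main2}.}
The strategy is to adapt the classical Wiener/Kakutani test to the quenched random conductance setting, using the heat kernel bounds of Barlow and Deuschel as the sole probabilistic input. These bounds yield quenched two-sided Green function estimates
\[
  c_1(\o)\,|x-y|^{2-d} \le g^\o(x,y) \le c_2(\o)\,|x-y|^{2-d}, \qquad |x-y|\ge N_0(\o),
\]
with $c_1(\o),c_2(\o)\in(0,\infty)$ and $N_0(\o)<\infty$ $\PP$-a.s. Combined with the identity $\P^\o_x(\tau_F<\infty)=\sum_{y\in F} g^\o(x,y)\,e^\o_F(y)$ (equilibrium measure of $F$), these give, for a dyadic cube $Q\subset S_n:=\{2^n\le|x|<2^{n+1}\}$ of side $s\le 2^n$, both a capacity-type upper bound $\P^\o_0(\tau_Q<\infty)\le C(\o)(s/2^n)^{d-2}$ and (once $F\subset S_n$ has positive discrete capacity $\asymp 2^{n(d-2)}$) a matching lower bound $\P^\o_x(\tau_F<\infty)\ge c(\o)>0$ uniformly for $|x|\asymp 2^n$. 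The proof decouples the problem along the dyadic shells $S_n$, exactly as in Barlow--Taylor \cite{BT92} for stable walks; only the Green-function input needs to be replaced by the quenched estimates above.

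For part (i), fix any $\a\in(\dim A,d-2)$. By the definition of discrete Hausdorff dimension, one can cover $A\cap S_n$ by dyadic cubes $\{Q^{(n)}_i\}$ of sides $s_i\le 2^n$ whose total $\a$-content $\sum_i s_i^\a$ is at most $2^{n\a}$ times a quantity summable in $n$. Combining the per-cube hitting estimate with the trivial bound $s_i^{d-2}\le s_i^\a\cdot 2^{n(d-2-\a)}$ yields
\[
  \P^\o_0\bigl(X \text{ hits } A\cap S_n\bigr)\;\le\;C(\o)\,2^{-n(d-2)}\sum_i s_i^{d-2}\;\le\;C(\o)\,2^{-n\a}\sum_i s_i^\a,
\]
which is summable in $n$. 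Since the VSRW is transient, $X_t\in A$ for arbitrarily large $t$ forces $X$ to visit $A\cap S_n$ for infinitely many $n$. The first Borel--Cantelli lemma then gives $\P^\o_0(X_t\in A$ for arb.\ large $t)=0$.

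For part (ii), pick $\a\in(d-2,\dim A)$. By Barlow--Taylor, there is an infinite set of scales $\mathcal N(\o)\subset\bN$ along which $A\cap S_n$ carries a Frostman-type measure $\mu_n$ with $\mu_n(A\cap S_n)=1$ and $\a$-energy $\sum_{x\ne y}|x-y|^{-\a}\mu_n(x)\mu_n(y)\le C$. Since $g^\o(x,y)\le c_2(\o)|x-y|^{2-d}$ and $2-d<-\a$ once $|x-y|\ge 1$, the Green energy obeys
\[
  \sum_{x,y\in A\cap S_n} g^\o(x,y)\,\mu_n(x)\mu_n(y)\;\le\;C(\o)\,2^{n(2-d)},
\]
giving $\mathrm{cap}^\o(A\cap S_n)\ge c_3(\o)\,2^{n(d-2)}$ via the variational characterisation of capacity. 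Together with the Green lower bound, one obtains
\[
  \P^\o_x\bigl(X \text{ hits } A\cap S_n \text{ before exiting }B(0,2^{n+2})\bigr)\;\ge\;c_0(\o)>0,\qquad n\in\mathcal N(\o),\ |x|\asymp 2^n.
\]
Let $\tau_n=\inf\{t:|X_t|\ge 2^n\}$, which is $\P^\o_0$-a.s.\ finite by transience, and let $V_n$ be the event that $X$ hits $A\cap S_n$ during $[\tau_n,\tau_{n+2}]$. Applying the strong Markov property at $\tau_n$ and the uniform lower bound yields $\P^\o_0(V_n\mid\mathcal F_{\tau_n})\ge c_0(\o)$ for every $n\in\mathcal N(\o)$, so the conditional (L\'evy) form of the second Borel--Cantelli lemma forces $V_n$ to occur infinitely often, i.e.\ $X_t\in A$ for arbitrarily large $t$ with probability $1$.

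\medskip

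\noindent \textbf{Main obstacle.} The genuinely delicate step is the lower-bound half of (ii): producing an effective Frostman measure on $A\cap S_n$ (at the infinitely many scales selected by $\dim A>d-2$) whose \emph{quenched} Green energy is controlled, and converting this into a positive hitting probability that is uniform in the (random) entry point of the walk into $S_n$. The need to upgrade a positive probability into probability one also requires the scale-separation device above, rather than any standard $0/1$ law, since the quenched chain on $\bZ^d$ is transient but possesses non-trivial bounded harmonic functions a priori.
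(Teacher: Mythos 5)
Your part (i) is essentially sound and is a mild repackaging of what the paper does: the per-cube hitting bound plus subadditivity over a near-optimal cover is exactly the content of the upper half of Lemma \ref{CapHau}, and first Borel--Cantelli replaces Lamperti's test for the convergent direction. (One detail you gloss over: the quenched Green bound (d) of Lemma \ref{Lem:BD} fails at distances below $U_x\wedge U_y$, so cubes of side smaller than roughly $n^{3d/2}$ do \emph{not} satisfy $\P^\o_0(\tau_Q<\infty)\le C(s/2^n)^{d-2}$; one must enlarge them, which costs a power of $\log$. This is why the paper works with $h_1(r)=r^{d-2}(\log(1/r))^{3d(d-2)/2}$ rather than $r^{d-2}$. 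Since you compare against $r^\alpha$ with $\alpha<d-2$, the logarithm is absorbed, but the step needs to be said.)

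Part (ii) has a genuine gap. You assert that $\dim A>d-2$ yields infinitely many scales $n$ at which $A\cap S_n$ carries a \emph{probability} measure of bounded $\alpha$-energy, hence a hitting probability bounded below by a fixed $c_0(\o)>0$ along a subsequence. This does not follow. The hypothesis $\dim A>\alpha$ only gives $\sum_n\nu_\alpha(A,S_n)=\infty$; the individual terms $\nu_\alpha(A,S_n)$ (equivalently, the normalized capacities $2^{-n(d-2)}{\rm Cap}_{g^\o}(A\cap S_n)$) may tend to $0$. Concretely, take $A\cap S_n$ to be a single cube of side $2^n n^{-\gamma}$ with $\gamma$ slightly below $1/(d-2)$: then $\dim A=1/\gamma>d-2$, but $\P^\o_0(X \hbox{ hits } A\cap S_n)\asymp n^{-\gamma(d-2)}\to 0$, so no subsequence of shells has uniformly positive hitting probability, and your conditional Borel--Cantelli argument never gets started. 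The divergent--sum--of--vanishing--terms regime is precisely what forces the paper to run Lamperti's Wiener test, i.e.\ a second-moment/quasi-independence estimate across shells based on the decay of $g^\o(x,y)$ between $S_n$ and $S_{n+m}$, which yields \emph{positive} probability from divergence of $\sum_n 2^{-n(d-2)}{\rm Cap}_{g^\o}(A\cap S_n)$. Your closing remark is also off: the upgrade from positive probability to probability one is done in the paper by Lemma \ref{01_law}, a Liouville-type zero-one law proved from the elliptic Harnack inequality of Barlow--Deuschel, so the VSRW does \emph{not} possess non-trivial bounded harmonic functions; some such zero-one law (or a genuinely uniform renewal structure, which your construction does not supply) is needed to finish.
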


The above theorems show that, if $\mu_e \ge 1$, then for almost every realization of the
environment, VSRW and CSRW have long term fractal and asymptotic behavior
similar to the simple random walk on $\Z^d$ and Brownian motion in $\R^d$.

%The following remarks deal with the remaining cases.

\begin{remark}\label{rmk:general_cases}
\begin{compactenum}[(i)]
\item When  $c^{-1} \le \mu_e \le c$ for some constant $c\ge 1$,
Barlow and Deuschel \cite[Remark 6.3 and Theorem 6.1]{BD10} prove that,
if $\{\mu_e, e \in E_d\}$ is stationary, symmetric and ergodic, then
Lemmas~\ref{Lem:BD} and \ref{01_law} below still hold.
(See also Delmott \cite{D99} for \hbox{i.i.d.} environment.)
As the proofs of Theorems~\ref{Th:main} and~\ref{Th:main2} only make use of
Lemmas \ref{Lem:BD} and \ref{01_law},
it follows that
in this case the
independence assumption on $\{\mu_e\}$ in Theorems \ref{Th:main} and \ref{Th:main2}
can be weakened and the same conclusions remain valid.

\item When $\PP\big(0 \le \mu_e \le 1\big)= 1$, only partial estimates on the
heat kernel of $X$ on the diagonal are available, see \cite{BeB08,BP07,M08}.
Berger et al. \cite{BeB08} show that Gaussian heat kernel bounds do not hold
in general. This is caused by traps due to edges in $E_d$ with small positive
conductances.

Under the extra condition that $\PP\big(\mu_e >0\big)> p_c(d)$, (otherwise the
range R of $X$ is a finite set,) Andres et al. \cite{ABDH10} prove that
the Green's function of $X$ satisfies bounds in (d) of Lemma \ref{Lem:BD} below,
but their Remark 7.6 shows that (e) of Lemma \ref{Lem:BD} does not hold in general.
Since our proofs of Theorems \ref{Th:main} and \ref{Th:main2} rely heavily on
Lemma \ref{Lem:BD}, it is not known whether similar results still hold. We will
consider these and related problems separately.
%Here, we mention the method for proving the lower bound in Theorem \ref{Th:main}.
% Andres et al. \cite{ABDH10} introduce an
%auxiliary process $Z$ which is the trace of $X$ on the infinite cluster obtained
%by deleting edges with too small or too large conductances. They prove that
%the heat kernel of the  process $Z$ satisfies (a), (b), (c) of Lemma \ref{Lem:BD}.
%By working with the process $Z$, whose range is a subset of R, we can prove that
%$\dim {\rm R}\ge 2$\, $\P_0^\o$-a.s.
%We will provide more details elsewhere. The case for Theorem \ref{Th:main2} also
%holds in this case because its proof only relies on (d) and (e) of Lemma
%\ref{Lem:BD}. It would be interesting to determine whether Theorems \ref{Th:main}
%and \ref{Th:main2} still hold.
\end{compactenum}
\end{remark}

The proofs of Theorems \ref{Th:main} and \ref{Th:main2} are similar to those of
Theorems 7.8 and 8.3 of Barlow and Taylor \cite{BT92},
where transient, strictly $\alpha$-stable random walks on $\Z^d$
are treated. However, there are significant differences between VSRW and strictly
stable random walks. One major difference is that VSRW is not a random walk in
the classical sense since it does not have \hbox{i.i.d.} increments.
We make use of general Markov techniques to derive hitting
probability estimates and maximal inequality for VSRW, and also to overcome the
difficulties caused by the dependence between the increments, see, e.g., Lemma
\ref{Lem:SLLN}. These results and the proofs of Theorems \ref{Th:main} and \ref{Th:main2} are
given in Section 3. Since our arguments are based on general Markovian techniques,
they will be useful for studying other properties of VSRW, as well as more
general Markov chains.

We also consider another kind of random walk in random environment,
namely Bouchaud's trap model (BTM). This model was first introduced in the physics
literature to explain some strange dynamical properties of complex disordered systems,
in particular aging. We refer to Barlow and \u Cern\' y \cite{BC11} for a brief
historical account on BTM and to Ben Arous and \u Cern\' y \cite{BenC}, Barlow and \u Cern\' y
\cite{BC11} and \u Cern\' y \cite{C11} for results on scaling limits.

To recall the definition of BTM, let $\{\kappa_x, x \in \Z^d\}$ be \hbox{i.i.d.}
positive random variables on a probability space $(\tilde{\Omega}, \tilde{\PP})$.
For a given constant $a \in [0, 1]$, define random conductances $\tilde{\mu}_e$
($e\in E_d$ ) on $\Z^d$ by
$$
\tilde{\mu}_{xy} = \kappa_x^a\kappa_y^a,\qquad \hbox{ if }\ x \sim y.
$$
Then BTM is the continuous time Markov chain on $\Z^d$
whose transition rates $w_{xy}$ are given by
\[
w_{xy} = \frac{\tilde{\mu}_{xy}} {\kappa_x} = \kappa_x^{a-1}\kappa_y^a,
\qquad \hbox{ if }\ x \sim y.
\]
If $a=0$, then $\tilde{\mu}_e=1$ for all $e\in E_d$ and the BTM  is
a time change of the simple random walk on~$\Z^d$. If $ a\ne 0$, then, following
Barlow and \u Cern\' y \cite{BC11}, it is referred to as the non-symmetric BTM.

Just in the same way as for the RCM, we can define the VSRW, denoted by $\tilde{X}$, associated with the conductances $\{\tilde\mu_e\}$. The BTM and
$\tilde{X}$ are again related to each other by a time change, see equation (2.3) in \cite{BC11}, and so in particular have the same range.
Similarly as before, for any environment $\{\tilde\mu_e(\o), e \in E_d\}$ and any $x\in \Z^d$, we write $\tilde{\P}^{\o}_x$
for the (quenched) law of $\tilde{X}$ started at $x$.

Even though the conductances $\{\tilde{\mu}_{e}, e \in E_d\}$ are not
independent any more, they form a stationary symmetric ergodic process. By
applying the results in Barlow and Deuschel \cite{BD10} and Barlow and
\u Cern\' y \cite{BC11} to the VSRW $\tilde{X}$, we can use the method in Section 3
to prove the following theorem.

\begin{theorem}\label{Th:BTM}
Assume that $d \ge 3$ and $\tilde{\PP}\big(\kappa_x \ge 1\big)= 1$. Let $\tilde{\rm R}$
be the range of the Bouchaud's trap model. Then for $\tilde{\PP}$-almost every
$\o \in \tilde{\Omega}$,
\[
\dim \tilde{{\mathrm R}} = \dimp \tilde{{\mathrm R}} = 2, \quad
\tilde{\P}^\o_0\hbox{-a.s.}
\]
Moreover, the conclusions of Theorem \ref{Th:main2} hold for $\tilde X$.
\end{theorem}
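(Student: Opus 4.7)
The plan is to reduce to the VSRW $\tilde X$ and then rerun the arguments of Section 3 with minimal changes. Since BTM and $\tilde X$ are related by a strictly increasing and continuous time change (cf.\ (2.3) of \cite{BC11}), they have the same range, so it suffices to prove the two statements for $\tilde X$ in place of $X$, and to verify that all the inputs used in Section 3 survive the passage from the i.i.d.\ conductance model to the BTM conductances $\tilde\mu_{xy}=\kappa_x^a\kappa_y^a$.

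The entire machinery of Section 3 rests on Lemmas \ref{Lem:BD} and \ref{01_law}, so I would first check both for $\tilde X$. The family $\{\tilde\mu_e\}$ is a translation-invariant local function of the i.i.d.\ family $\{\kappa_x\}$, hence stationary, symmetric and ergodic under $\Z^d$-translations; moreover $\kappa_x\ge 1$ together with $a\in[0,1]$ forces $\tilde\mu_e\ge 1$ almost surely. These are precisely the structural hypotheses under which Barlow--Deuschel \cite{BD10}, and then Barlow--\u Cern\' y \cite{BC11} in the BTM setting, establish the quenched Gaussian heat kernel and Green's function bounds, as well as the quenched invariance principle recorded in Lemma \ref{Lem:BD}. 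The 0-1 law in Lemma \ref{01_law} only uses stationarity and ergodicity of the environment plus the Markov property of the walk, so it transfers immediately to $\tilde X$.

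With Lemmas \ref{Lem:BD} and \ref{01_law} in force for $\tilde X$, the proofs of Theorems \ref{Th:main} and \ref{Th:main2} given in Section 3 apply almost verbatim. The upper bound $\dimp\tilde R\le 2$ comes from a dyadic covering of $\tilde R$ controlled by the Green's function estimate; the matching lower bound $\dim\tilde R\ge 2$, the coincidence of Hausdorff and packing dimension, and the hitting dichotomy for sets $A$ with $\dim A$ on either side of $d-2$ are obtained from capacity/second-moment arguments built on the heat kernel bounds, together with the maximal inequality and strong Markov steps of Section 3 (compare Lemma \ref{Lem:SLLN}, whose derivation uses only the general Markov structure and the quenched estimates).

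The only genuinely non-routine point is therefore the extension of Lemma \ref{Lem:BD} to the BTM environment $\{\tilde\mu_e\}$, which is correlated and unbounded above. This is the main obstacle: one must rely on the cited works of Barlow--Deuschel and Barlow--\u Cern\' y to ensure that the stationarity, symmetry, ergodicity and lower bound $\tilde\mu_e\ge 1$ suffice to reproduce the full set of quenched estimates. Once this is accepted, the balance of the proof is a direct transcription of the arguments written out in Section 3.
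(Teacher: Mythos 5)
Your overall strategy is exactly the paper's: pass to the VSRW $\tilde X$ via the time change, verify that Lemmas \ref{Lem:BD} and \ref{01_law} hold in the BTM environment, and then transcribe Section 3. Your identification of the extension of Lemma \ref{Lem:BD} as the main point to be checked is also how the paper proceeds; it imports those estimates from Lemma 9.1 of Barlow and \u Cern\' y \cite{BC11} together with Theorem 6.1 of Barlow and Deuschel \cite{BD10} (and disposes of the degenerate case $a=0$ separately by citing Barlow and Taylor, though your argument covers that case as well).

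The one place your justification is wrong is the zero-one law. Lemma \ref{01_law} is \emph{not} a consequence of stationarity and ergodicity of the environment plus the Markov property: its proof is a Liouville-type argument that applies the quenched elliptic Harnack inequality (Corollary 4.8 of \cite{BD10}) to the bounded harmonic function $h(x)=\P^\o_x\big(X_t\in A\ \hbox{for arbitrarily large } t\big)$ to conclude that $h$ is constant, and then invokes martingale convergence. Ergodicity of $\{\tilde\mu_e\}$ under translations says nothing about triviality of this tail event under the quenched law $\tilde{\P}^\o_0$, so ``transfers immediately'' is not justified as written. What is actually needed, and what the paper supplies, is that $\tilde X$ itself satisfies an elliptic Harnack inequality; this follows from Lemma 9.1 and Proposition 3.2 of Barlow and \u Cern\' y \cite{BC11}. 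With that citation substituted for your ergodicity remark, the argument is complete and coincides with the paper's.
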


The proof of Theorem \ref{Th:BTM} is given in Section 4. Similarly to
Remark~\ref{rmk:general_cases}~(ii), it would be interesting to determine
whether the assumption that $\kappa_x$ is bounded from below can be removed.

As is mentioned by an anonymous referee, in light of the above results
it would be interesting to investigate the discrete fractal dimensions of
percolation clusters and the images $X(E)$, where $E\subset \R_+$ and $X$
is VSRW or BTM, or an ordinary $\alpha$-stable random walk as in Barlow
and Taylor \cite{BT92}. We thank him/her for his/her thoughtful suggestions,
and we will study these questions in subsequent work.

Throughout this paper, for any $x,y\in\Z^d$, $|x-y|$ stands for the Euclidean distance, and
the
$\ell_\infty$ distance is denoted by
$\d{x-y}= \max_{i=1}^d |x_i - y_i|$.
We will use $c, c', c''$ etc to denote unspecified positive
and finite (nonrandom) constants, which may depend on the distribution of the
environment and may be different
in each occurrence. More specific constants are numbered as $c_{1}, \, c_{2},\cdots$.
\bigskip

\section{Preliminaries}

In this section, we recall some known facts about the ${\mathrm {VSRW}}$ and
discrete Hausdorff and packing dimensions, and prove a strong law of large numbers
(SLLN) for dependent events, which will be used in this paper.

\subsection{Some basic properties of VSRW}

Let $X= \{X_t, t \ge 0\}$ be a VSRW with values in $\Z^d$ with
$d \ge 3$ and let $p_t^{\o}(x, y) = \P^{\o}_x(X(t) = y)$ be its transition
density or the heat kernel of ${\cal L}_V$.

The following estimates for the transition density $p_t^{\o}(x, y)$
and the Green's function $g_t^{\o}(x, y)$ will be used in the sequel.
Recall that when $d \ge 3$, $X$ is transient and $g_t^{\o}(x, y)$ is
defined by
\[
g_t^{\o}(x, y) = \int_0^\infty p_t^{\o}(x, y)\, dt.
\]

\begin{lemma}\label{Lem:BD}
Let
$d \ge 3$,
$\PP(\mu_e \ge 1)= 1$, and $\eta \in (0, 1)$. There exist
random variables $\{U_x, x \in \Z^d\}$ and positive (nonrandom) constants
$c_i$ (depending on $d$ and the distribution of $\mu_e$) such that
\begin{equation*}\label{Eq:U}
\PP\big( U_x\ge n \big)\le c_1 \exp\big(- c_2 n^\eta\big).
\end{equation*}
\begin{compactenum}[(a)]
\item[(a)]\, {\rm \cite[Theorem 1.2(a)]{BD10}}
For all $x,y \in \Z^d$ and $t >0$,
\begin{equation*}
   p^\omega_{t}(x,y) \leq 1 \wedge \big(c_3 t^{-d/2} \big).
\end{equation*}

\item[(b)]\, {\rm \cite[Theorem 1.2(b)]{BD10}} If $|x-y|\vee\sqrt{t}\geq U_x$, then
\[
\aligned
p_t^\omega(x,y) \leq \left\{\aligned
& c_4 t^{-d/2}\exp(-c_5 |x-y|^2/t), & \mbox{  when } t\geq |x-y|,\\
& c_4\exp \bigr(-c_5 |x-y|(1 \vee \log(|x-y|/t) ) \bigl),
&\mbox{when } t\leq |x-y|.
     \endaligned\right.
\endaligned
\]

\item[(c)]\, {\rm \cite[Theorem 1.2(c)]{BD10}} If $t\geq
U_x^2\vee|x-y|^{1+\eta}$, then
\[
   p_t^\omega(x,y) \geq c_6 t^{-d/2}\exp(-c_7|x-y|^2/t).
\]

\item[(d)]\, {\rm \cite[Theorem 1.3]{BD10}} If $ |x-y|\ge U_x \wedge U_y$, then
\[
\frac{c_{8}} {|x-y|^{d-2}} \leq g^\omega(x,y)\leq \frac{ c_{9}}
{|x-y|^{d-2}}.
\]

\item[(e)]\, {\rm \cite[Lemma 3.4]{BC11}} %When $d\geq 3$,
For all $x,y \in \Z^d$,
\[
  g^\omega(x,y)\leq c_{10}.
\]

\item[(f)]\, {\rm \cite[Lemma 3.3]{BC11}}
There exists $c_{11}>0$ such that for each $K>0$, the inequality
\begin{equation}\label{eq:U_x}
   \max_{|x|\leq Kn} U_x\leq  c_{11} (\log n)^{1/\eta}
\end{equation}
holds with $\PP$-probability no less than $1-c_{12} K^d n^{-2}$.
In particular, $\PP$-\hbox{a.s.} there
exists $n_0 = n_0 (\o)$ such that \eqref{eq:U_x} holds for all $n \ge n_0$.
\end{compactenum}
\end{lemma}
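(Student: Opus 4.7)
The statement is a compilation of heat-kernel and Green-function estimates already established in Barlow and Deuschel \cite{BD10} and Barlow and \u Cern\' y \cite{BC11}; the reference numbers given in each item tell us exactly which result to invoke. My plan is therefore not to reprove any of (a)--(f) from scratch, but rather to reconcile the auxiliary random variables appearing in the original sources into a single family $\{U_x, x\in \Z^d\}$ satisfying a common stretched-exponential tail bound.

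Concretely, I would proceed as follows. Parts (a) and (e) are uniform in $x, y$ and involve no auxiliary random variable. Parts (b), (c), (d) (from \cite{BD10}) and part (f) (from \cite{BC11}) each come with their own random variable; call them $V_x^{(b)}, V_x^{(c)}, V_x^{(d)}, V_x^{(f)}$. By hypothesis each has a tail of the form $\PP(V_x^{(i)} \ge n) \le c \exp(-c' n^\eta)$. Setting $U_x := \max_i V_x^{(i)}$ then defines a single family, and a union bound over the (finitely many) indices shows that $U_x$ still obeys a tail bound of the same stretched-exponential form, possibly with degraded constants $c_1, c_2$. Re-reading (b), (c), (d) with this unified $U_x$ in place of $V_x^{(\cdot)}$ still gives the stated conclusions, because a hypothesis such as $|x-y|\vee \sqrt{t} \ge U_x$ with the enlarged $U_x$ is only more restrictive than with the original $V_x^{(\cdot)}$.

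The only genuinely new calculation is the union bound in (f). Given the tail bound on $U_x$, I would choose $c_{11}$ so that $c_2 c_{11}^\eta \ge d+2$ and then estimate
\[
   \PP\Big(\max_{|x|\le Kn} U_x > c_{11}(\log n)^{1/\eta}\Big)
   \le \sum_{|x|\le Kn} c_1 \exp\big(-c_2 c_{11}^\eta \log n\big)
   \le c_{12}\, K^d n^{-2}.
\]
The $\PP$-a.s.\ existence of $n_0(\o)$ beyond which \eqref{eq:U_x} holds is then immediate from Borel--Cantelli along integer $n$, since $\sum_n n^{-2} < \infty$.

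The main (and essentially only) obstacle is the bookkeeping: one must track carefully which of the cited estimates depend on $U_x$ alone, which on $U_x \wedge U_y$ as in (d), and which on both; verify that the $V_x^{(i)}$ extracted from the various sources indeed have tails of the required stretched-exponential form with a common exponent $\eta$ (shrinking $\eta$ to the minimum if necessary); and confirm that each $V_x^{(i)}$ is well-defined pointwise in $x$ so that $U_x$ is too. Beyond these reconciliations there is no new probabilistic content.
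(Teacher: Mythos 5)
Your proposal is correct and matches the paper's treatment: the paper states Lemma~\ref{Lem:BD} purely as a compilation of the cited results from \cite{BD10} and \cite{BC11} and offers no proof of its own, so the reconciliation you describe (taking $U_x$ to be the maximum of the auxiliary variables from the sources, noting the stretched-exponential tail survives a finite maximum, and deducing (f) by a union bound plus Borel--Cantelli) is exactly the implicit content. Nothing further is needed.
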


In the rest of this paper, we take $\eta = 1/3$. Hence $\PP$-\hbox{a.s.} there
exists $n_0 = n_0 (\o)$ such that
\begin{equation} \label{Eq:n0}
\max_{\d{x}\le 2^n} U_x \le c_{11}\, n^3 \qquad \hbox{for all } \, n \ge n_0.
\end{equation}

We will sometimes work with the \emph{discrete time} VSRW $\wh X =
\{\wh X_n: n=0,1,\ldots\}$ defined by
$\wh X_n = X_n$ for $n=0,1,\ldots$. Its transition probabilities are nothing but
$p_n^\o(x,y)$, so in particular, satisfy (a), (b),~(c) of the
previous lemma. Define its Green's function as
\begin{equation}\label{def:green_discrete}
\wh{g}^\o(x,y)=\sum_{n=0}^\infty p_n^\o(x, y).
\end{equation}
Then using (a), (b) and (c) of Lemma \ref{Lem:BD} and similar
computations as in \S4.3 of Lawler and Limic \cite{LL10} one can derive
\begin{lemma}\label{Lem:green_discrete}
When $d\geq 3$, the inequalities in (d) and (e)  of Lemma
\ref{Lem:BD} also hold for $\wh{g}^\o(x,y)$.
\end{lemma}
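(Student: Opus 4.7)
The plan is to bound $\wh g^\o(x,y)=\sum_{n=0}^\infty p_n^\o(x,y)$ by applying the pointwise heat kernel estimates (a), (b), (c) of Lemma~\ref{Lem:BD} at integer times $t=n$, in the spirit of the Riemann-sum-vs.-integral comparisons used for $\alpha$-stable walks in \S4.3 of Lawler and Limic~\cite{LL10}. The uniform bound corresponding to (e) is immediate from (a): since $p_n^\o(x,y)\leq 1\wedge c_3 n^{-d/2}$ and $d/2>1$, the sum $\sum_{n=0}^\infty (1\wedge c_3 n^{-d/2})$ converges to a finite constant depending only on $d$ and the distribution of $\mu_e$.

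For the two-sided bound corresponding to (d), by the symmetry $\wh g^\o(x,y)=\wh g^\o(y,x)$ I may assume $U_x\leq U_y$, and write $r:=|x-y|\geq U_x$; since $x\neq y$, also $r\geq 1$. I would split the sum into three ranges: $1\leq n\leq r$, $r<n\leq r^2$, and $n>r^2$. The hypothesis $|x-y|\vee\sqrt n\geq U_x$ of (b) is satisfied throughout, since $r\geq U_x$. In the first range, the off-diagonal form of (b) yields $p_n^\o(x,y)\leq c_4 e^{-c_5 r}$, so the partial sum is at most $c\,r\,e^{-c_5 r}$, which is negligible compared with $r^{2-d}$. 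In the second range, the Gaussian form of (b) combined with the change of variable $u=r^2/n$ converts the partial sum into a bounded multiple of $r^{2-d}\int_1^\infty u^{d/2-2}e^{-c_5 u}\,du=O(r^{2-d})$. In the third range, bounding $\exp(-c_5 r^2/n)\leq 1$ and comparing the resulting sum with $\int_{r^2}^\infty t^{-d/2}\,dt$ gives a partial sum of order $r^{2-d}$ since $d\geq 3$. Adding these three contributions delivers the desired upper bound $\wh g^\o(x,y)\leq c\, r^{2-d}$.

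For the matching lower bound I would restrict to the window $n\in[r^2,2r^2]$. Because $r\geq U_x$, $r\geq 1$, and $\eta=1/3$, each such $n$ satisfies $n\geq r^2\geq U_x^2\vee r^{1+\eta}$, so (c) applies and yields
\begin{equation*}
p_n^\o(x,y)\geq c_6 (2r^2)^{-d/2}\exp(-c_7)\geq c\, r^{-d}.
\end{equation*}
Summing over the $\sim r^2$ integer values of $n$ in the window then produces $\wh g^\o(x,y)\geq c\, r^{2-d}$, as required.

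I do not foresee a serious obstacle: this is a routine discrete analogue of the integration used to derive the continuous-time Green's function bounds in Lemma~\ref{Lem:BD}~(d)--(e). The only bookkeeping subtlety is verifying the side conditions of (b) and (c) within each summation range, and these reduce immediately to $r\geq U_x$ (by hypothesis) together with $r^2\geq U_x^2\vee r^{1+\eta}$ (since $\eta<1$ and $r\geq 1$).
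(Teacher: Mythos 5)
Your argument is correct and is precisely the computation the paper leaves implicit: the paper's proof consists of the single remark that the lemma follows from parts (a), (b), (c) of Lemma~\ref{Lem:BD} together with ``similar computations as in \S4.3 of Lawler and Limic,'' and your three-range splitting of $\sum_n p_n^\o(x,y)$ (with the lower bound extracted from the window $n\in[r^2,2r^2]$ via (c), and the side conditions reduced to $r\ge U_x$ and $r^2\ge U_x^2\vee r^{1+\eta}$) is exactly that standard computation carried out in detail. No discrepancy with the paper's intended route.
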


We also recall the following connection between the hitting probabilities
of a time homogeneous transient Markov chain
$\{X_t,\, t \ge 0,\P_x, x\in E\}$ on a discrete state space $E$
and the capacity with respect to its Green's function $g(x, y)$.

It is known that for any finite set $A \subseteq E$,
there is a positive function $b(\cdot)$ supported by $A$ such that
\begin{equation}\label{Eq:Hittcap}
\P_x \big(X_t \in A\, \hbox{ for some }\, t \ge 0\big) =
\sum_{y\in A} g(x, y) b(y).
\end{equation}
This follows from Chung \cite{Chung73, Chung75}. For an explicit expression
of the function $b(\cdot)$, see also Syski \cite[p.435]{Syski92} or the
proof of Lemma~\ref{Lem:Cap} in the Appendix.

The natural capacity of $A$ with respect to $g$ is defined by
\begin{equation}\label{Eq:Hittcap2}
{\rm Cap}_g(A) = \sum_{y\in A} b(y).
\end{equation}
For any measure $\sigma$ on $A$, write $(g\sigma)(x)
= \sum_{y \in A} g(x, y) \sigma(y)$ for the potential due to the
charge~$\sigma$. Then we have
\begin{lemma}\label{Lem:Cap}
If the time homogeneous transient Markov chain $\{X_t,\, t \ge 0\}$
has a discrete state space $E$, is right continuous, and satisfies the
following conditions:
\begin{compactenum}[(i)]
  \item $p_t(x,y)\leq f(t)$ for all $x,y\in E$, where the function
  $f$ may depend on $(x,y)$ and is decreasing and integrable on $[0,\infty)$;
  \item for any $x\in E$, the rate $q_x$ of leaving $x$ is finite.
%%%
%  for any $T>0$ and $x, y \in E$, $\sup_{0\leq s < t\leq s
%  +\delta\leq T}|p_s(x,y) -p_{t}(x,y)|=O(\delta)$;
%  \item there exists $C=C(x)<\infty$ such that $\sup_\vep \P_x(X_\vep\neq x)/\vep \leq C.$
%%%
\end{compactenum}
Then
for any finite set $A \subseteq E$,
\begin{equation}\label{eq:capacity_max}
{\rm Cap}_g(A) = \max\Big\{ \sigma(A): \, \sigma \hbox{ is a
measure on }\, A \hbox{ such that }\, \max_{x \in A}
(g\sigma)(x) \le 1 \Big\}.
\end{equation}
In particular, \eqref{eq:capacity_max} holds for the VSRW.
\end{lemma}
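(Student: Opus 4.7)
My plan is to verify the two inequalities in \eqref{eq:capacity_max} separately. The key object is the equilibrium measure $b$ produced by \eqref{Eq:Hittcap}: I would first observe that because $X$ is right continuous, any start from $x \in A$ gives $\P_x(X_t \in A \hbox{ for some } t \ge 0) = 1$ trivially, since $X_0 = x \in A$. Substituting into \eqref{Eq:Hittcap} this forces $(gb)(x) = \sum_{y \in A} g(x,y) b(y) = 1$ for every $x \in A$. Hence $b$ itself satisfies the constraint $\max_{x\in A}(g\sigma)(x)\le 1$ with equality, and $b(A) = {\rm Cap}_g(A)$. This already yields the ``$\ge$'' direction of the maximum in \eqref{eq:capacity_max}.

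For the ``$\le$'' direction, I take any admissible measure $\sigma$ on $A$ with $(g\sigma)(x) \le 1$ for all $x \in A$ and write
\[
\sigma(A) \;=\; \sum_{x \in A} \sigma(x) \;=\; \sum_{x \in A} \sigma(x)\,(gb)(x) \;=\; \sum_{x,y \in A} \sigma(x)\, g(x,y)\, b(y).
\]
Since $A$ is finite, interchanging the two sums is harmless and produces $\sum_{y \in A} b(y) \sum_{x \in A} g(x,y)\sigma(x)$. For the VSRW (and more generally for any chain reversible with respect to counting measure) the Green's kernel $g(\cdot,\cdot)$ is symmetric, so the inner sum equals $(g\sigma)(y) \le 1$, and we conclude $\sigma(A) \le \sum_{y\in A} b(y) = {\rm Cap}_g(A)$.

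The main work, which I would place in the appendix, is to justify the existence and nonnegativity of the equilibrium measure $b$ satisfying \eqref{Eq:Hittcap}. The standard construction, following Chung \cite{Chung73,Chung75} and Syski \cite{Syski92}, is a last-exit decomposition from the finite set $A$, giving $b(y)$ explicitly as a sum over paths whose last visit to $A$ occurs at $y$; equivalently, $b$ is the unique solution of the finite linear system $(gb)(x) = 1$ on $A$, which is well posed by transience. Condition (ii) ensures the jump times do not accumulate so that $T_A$ and the strong Markov property at $T_A$ are available; condition (i) guarantees $g(x,y) < \infty$ so that the potential $gb$ is meaningful. The main obstacle is therefore not the variational identity itself --- which reduces to the very short calculation above once the equilibrium measure is in hand --- but rather the careful bookkeeping needed for the last-exit construction of $b$, along with checking that symmetry of $g$ (reversibility of VSRW with respect to counting measure) is what makes the swap of sums in the second paragraph legitimate.
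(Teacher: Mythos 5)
Your argument takes a genuinely different and much shorter route than the paper's. The paper does not use the continuous-time identity \eqref{Eq:Hittcap} as a black box: it rebuilds the equilibrium measure $b$ by discretizing time (setting $X^{(n)}_i=X_{i/2^n}$), invoking the discrete-time last-exit decomposition and the discrete-time variational formula from Revuz \cite{Revuz84}, and passing to the limit; hypotheses (i) and (ii) are used precisely there, to get $2^{-n}g^{(n)}(x,y)\to g(x,y)$ by domination and equicontinuity of $t\mapsto p_t(x,y)$, and to bound $2^n\P_y(S_A^{(n)}=\infty)\le q_y$ so that a convergent subsequence exists, the limit being identified through uniqueness of the Riesz decomposition. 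You instead accept \eqref{Eq:Hittcap} (which the paper does state as known, citing Chung), observe that $(gb)(x)=\P_x(T_A<\infty)=1$ for $x\in A$ because $X_0=x\in A$, and deduce both inequalities of \eqref{eq:capacity_max} from a two-line computation. Granting \eqref{Eq:Hittcap} with $b\ge 0$, that computation is correct and is arguably the most transparent way to see why the variational identity holds.

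Two caveats. First, your proof of the ``$\le$'' direction swaps the arguments of $g$, so it requires $g(x,y)=g(y,x)$. This holds for the VSRW (reversible with respect to counting measure), but Lemma \ref{Lem:Cap} is stated for an arbitrary time-homogeneous transient chain satisfying (i) and (ii); the paper's reduction to the discrete-time result in \cite{Revuz84} does not need symmetry, so as written your argument proves a formally weaker statement --- sufficient for this paper's application, but you should either flag the extra hypothesis or restrict the lemma. Second, the ``main work'' you defer, namely existence and nonnegativity of $b$ satisfying \eqref{Eq:Hittcap}, is exactly what the authors say they could not locate in a continuous-time, discrete-state form fitting their hypotheses, and your sketch of it (a sum over paths whose last visit to $A$ occurs at $y$) is a discrete-time picture that does not transfer verbatim to continuous time; making it rigorous is essentially the discretization argument the paper carries out, and that is where (i) and (ii) do real work beyond merely ensuring $g<\infty$ and non-accumulation of jumps. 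Note also that solving the linear system $(gb)=1$ on $A$ gives existence and uniqueness but not, by itself, nonnegativity of $b$, which your variational argument needs.
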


It follows from (\ref{eq:capacity_max}) that the capacity of a
singleton $\{x\}$ is $g(x, x)^{-1}$.

As Lemma \ref{Lem:Cap} is (more or less) well known (but we didn't
succeed in finding a version similar to what is stated above that
fits our needs), we shall only briefly sketch its proof in the Appendix.

\subsection{Discrete fractal dimensions}
We recall briefly the definitions and basic properties of
fractal dimensions for subsets of $\Z^d$ from Barlow and Taylor \cite{BT92}.

For $x \in \Z^d$ and $n \ge 1$, define cubes
\begin{equation}\label{Eq:Cubes}
\begin{split}
C(x, n)&= \{y \in \Z^d: x_i \le y_i < x_i + n\}, \q\mbox{and}\\
V(x, n)&= \big\{y \in \Z^d: x_i - \frac 1 2 n \le y_i < x_i + \frac 1 2 n\big\}.
\end{split}
\end{equation}
Clearly $C(x, 1) = V(x, 1) = \{x\}$ and $\#\big(C(x, n)\big) =
\#\big(V(x, n)\big) = n^d$. Here and in the sequel, $\#(A)$ denotes the
cardinality of $A$.

Denote by ${\EuScript C}, {\EuScript C}_d$ and ${\EuScript C}_s$
the classes of cubes, dyadic cubes and semi-dyadic cubes in $\Z^d$. Namely,
\begin{equation}\label{Eq:Cubes2}
\begin{split}
{\EuScript C}&=  \big\{C(x, n): x \in \Z^d, n \ge 1\big\},\\
{\EuScript C}_d&=  \big\{C(x, 2^n): x \in 2^n \Z^d, n \ge 1\big\}, \q\mbox{and}\\
{\EuScript C}_s&=  \big\{V(x, 2^n): x \in 2^{n-1} \Z^d, n \ge 1\big\}.
\end{split}
\end{equation}

The side of $A \subseteq \Z^d$, denoted by $s(A)$, is defined by
\[
s(A) = \inf\big\{r > 0:\, A \subseteq C(x, r) \hbox{ for some } \,
x \in \Z^d\big\}.
\]
Let ${\EuScript C}_d^k$ and ${\EuScript C}_s^k$ denote the classes of dyadic
and semi-dyadic cubes of side $2^k$. Note that each $x \in \Z^d$ belongs
to a unique cube in ${\EuScript C}_d^k$, which is denoted by $Q_k(x)$.
Each $x \in \Z^d$ belongs to
$2^d$ cubes in ${\EuScript C}_s^k$ and we write $\widetilde V(x, 2^k)$ for the
semi-dyadic cube $V\in {\EuScript C}_s^k$ with center closest to $x$.

Let $V_n = V(0, 2^n)$ for all $n \ge 0$, $S_1 = V_1$ and $S_n = V_n \backslash V_{n-1}$
for $n \ge 2$. Thus $\{S_n, n \ge 2\}$ is a sequence of disjoint cubical shells
centered on the point $(-\frac 1 2, \ldots, - \frac 1 2)$.

Let $\cal H$ be the collection of functions $h: \R_+ \to \R_+$ such that $h$ is continuous,
monotone increasing, $h(0)=0$, and satisfies $h(2r) \le c_h\, h(r)$ for all $r \in [0, 1/2]$,
where $c_h$ is a constant. Functions in $\cal H$ are called measure functions.

For any $A \subseteq \Z^d$ and $h\in {\cal H}$, set
\begin{equation}\label{Def:Hmeasure0}
\nu_h(A, S_n) = \min\bigg\{\sum_{i=1}^k h\Big(\frac{s(B_i)} {2^n}\Big):
B_i \in {\EuScript C},\, A \cap S_n \subset \bigcup_{i=1}^k B_i\bigg\}.
\end{equation}
The discrete Hausdorff measure of $A$ with respect to the measure function $h$
is defined by
\begin{equation}\label{Def:Hmeasure}
m_h (A) = \sum_{n=1}^\infty \nu_h(A, S_n).
\end{equation}

If $h(r) = r^\alpha$ ($\alpha > 0$), we write $\nu_\alpha$ and $m_\alpha$
for $\nu_h$ and $m_h$, respectively.
The discrete Hausdorff dimension of $A$ is defined by
\begin{equation}\label{Def:dim}
\dim A = \inf\big\{\alpha> 0: m_\alpha (A) < \infty\big\}.
\end{equation}

It is often more convenient to replace ${\EuScript C}$ in (\ref{Def:Hmeasure0}) by
the smaller class ${\EuScript C}_d$, the corresponding values to (\ref{Def:Hmeasure0})
and (\ref{Def:Hmeasure}) will be written as $\widetilde{\nu}_h (A, S_n)$
and $\widetilde{m}_h(A)$, respectively. Barlow and Taylor \cite[p.128]{BT92}
proved that $\nu_h(A, S_n)
\le \widetilde{\nu}_h (A, S_n) \le 2^d \nu_h(A, S_n)$. Hence ${m}_h(A)$ and
$\widetilde{m}_h(A)$ are comparable, and replacing $m_\alpha $ in \eqref{Def:dim}
by $\widetilde{m}_\alpha$ defines the same $\dim A$.

Discrete packing measure and packing dimension of $A$ are defined in a dual way.
For any $h \in {\cal H}$ and $\ep>0$, define
\begin{equation}\label{Def:pmeasure0}
\tau_h(A, S_n, \ep) = \max\bigg\{\sum_{i=1}^k h\Big(\frac{r_i} {2^n}\Big):
x_i \in A \cap S_n, V(x_i, r_i) \hbox{ disjoint,}\ 1 \le r_i \le 2^{(1-\ep)n} \bigg\}
\end{equation}
and
\begin{equation}\label{Def:pmeasure}
p_h(A, \ep) = \sum_{n=1}^\infty \tau_h(A, S_n, \ep).
\end{equation}
A set $A \subseteq \Z^d$ is said to be \emph{$h$-packing finite} if $p_h(A, \ep)< \infty$
for all $\ep \in (0, 1)$. Again, if $h(r) = r^\alpha$, we write $\tau_h$ and $p_h$ as
$\tau_\alpha$ and $p_\alpha$.

The discrete packing dimension
of $A$ is defined by
\begin{equation}\label{Def:dimp}
\dimp A = \inf\big\{\alpha> 0: A \hbox{ is $r^\alpha$-packing finite} \big\}.
\end{equation}
One can use semi-dyadic cubes in (\ref{Def:pmeasure0}) and define
\begin{equation}\label{Def:pmeasures}
\widetilde{\tau}_h(A, S_n, \ep) = \max\bigg\{\sum_{i=1}^k h\Big(\frac{2^{k_i}} {2^n}\Big):
x_i \in A \cap S_n, \widetilde{V}(x_i, 2^{k_i}) \hbox{ disjoint,}\ 2^{k_i}
\le 2^{(1-\ep)n} \bigg\}
\end{equation}
and the corresponding $\widetilde{p}_h(A, \ep)$. \cite[p.130]{BT92}
proved that there exists a constant $c>1$ (depending on $h$) such that
$$
c^{-1}\tau_h(A, S_n, \ep) \le \widetilde{\tau}_h(A, S_n, \ep)
\le c\, \tau_h(A, S_n, \ep)
$$
for all $A, n $ and $\ep \in (0, 1)$. Thus, $A$ is $h$-packing finite if and only if
$\widetilde{p}_h(A, \ep)< \infty$ for all $\ep \in (0, 1)$.

From (\ref{Def:dim}) and (\ref{Def:dimp}) it is clear that $\dim A$ and $\dimp A$ do not
depend on the part of~$A$ which lies inside any ball of finite radius. They are determined
by the geometric structure of~$A$ at infinity.
Similarly to the ordinary Hausdorff and packing dimensions in $\R^d$, the discrete Hausdorff and
packing dimensions on $\Z^d$ satisfy the following relationship: For all $A \subseteq  \Z^d$,
\begin{equation}\label{dim-dimp}
0 \le \dim A \le \dimp A \le d,
\end{equation}
and the inequalities may be strict. See Barlow and Taylor
\cite[pp.130, 132 and 136]{BT92}.

It is often not difficult to find optimal covering or packing for $A \cap S_n$,
which leads to good upper bound for $\nu_h(A\cap S_n)$ and lower bound
for $\tau_h(A, S_n,\ep)$. However, a direct approach for obtaining the
lower bound for $\nu_h(A\cap S_n)$ (or upper bound for $\tau_h(A, S_n,\ep)$)
is usually tricky. The following lemmas are useful. The first is an analogue
of the density lemma and is a consequence of Theorem 4.1 of Barlow and Taylor \cite{BT92}. The
second is an analogue of Frostman's lemma and follows from Theorem 4.2 of Barlow and Taylor
\cite{BT92}.
\begin{lemma}\label{Lem:density}
Let $h \in {\cal H}$ and $\mu$ be a measure on $A \subseteq S_n$. If
\begin{equation*}\label{density}
\mu(A \cap V(x, 2^k)) \le a_1 \, h\big(2^{k-n}\big)\quad \ \hbox{ for all } x \in \Z^d,
\ 0 \le k \le n.
\end{equation*}
Then $ \nu_h (A,\, S_n) \ge 2^{-d} a_1^{-1}\mu(A)$.
\end{lemma}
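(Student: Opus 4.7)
The plan is to invoke the equivalence of $\nu_h$ with its dyadic counterpart $\wt{\nu}_h$ recorded just after \eqref{Def:Hmeasure}, and then to work with a dyadic covering, in which each covering cube is automatically a semi-dyadic cube $V(x,2^k)$ with $x\in\Z^d$, so the hypothesis applies directly.

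Concretely, I will take an arbitrary dyadic covering $\{B_i\}\subseteq\ESC_d$ of $A\cap S_n$, with $B_i=C(y_i,2^{k_i})$ and $y_i\in 2^{k_i}\Z^d$, and observe that $B_i=V\bigl(y_i+(2^{k_i-1},\ldots,2^{k_i-1}),\,2^{k_i}\bigr)$; that is, $B_i$ is a semi-dyadic cube of the form $V(x_i,2^{k_i})$ with $x_i\in\Z^d$ (the case $k_i=0$, in which $B_i=\{y_i\}=V(y_i,1)$, is trivial). Applying the hypothesis then gives $\mu(A\cap B_i)\le a_1\,h(2^{k_i-n})=a_1\,h(s(B_i)/2^n)$. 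Summing over $i$ and using $\mu(A)=\mu(A\cap S_n)\le\sum_i\mu(A\cap B_i)$, then passing to the infimum over dyadic coverings, I obtain
$$\mu(A)\;\le\;a_1\,\wt{\nu}_h(A,S_n)\;\le\;2^d\,a_1\,\nu_h(A,S_n),$$
where the last inequality is the comparison $\wt{\nu}_h\le 2^d\nu_h$ of \cite{BT92} already recorded in the paper. Rearranging yields the stated bound $\nu_h(A,S_n)\ge 2^{-d}a_1^{-1}\mu(A)$.

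The argument is essentially an unpacking of definitions; the main (minor) obstacle is just recognising that dyadic cubes coincide with semi-dyadic cubes centred at integer points, so that the hypothesis applies with no geometric loss. The factor $2^{-d}$ in the conclusion arises purely from the comparison between general-cube coverings (which define $\nu_h$) and dyadic-cube coverings (which define $\wt{\nu}_h$); no doubling constant from $h\in\mathcal{H}$ intervenes.
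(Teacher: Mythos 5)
Your argument is correct, and it actually supplies a proof where the paper gives none: the authors state this lemma as ``a consequence of Theorem 4.1 of Barlow and Taylor \cite{BT92}'' and do not prove it, so the only in-paper content is a citation. Your route --- pass to a dyadic cover, identify each dyadic cube $C(y,2^{k})$ with the integer-centred cube $V\bigl(y+(2^{k-1},\ldots,2^{k-1}),2^{k}\bigr)$ so that the density hypothesis applies verbatim, sum over the cover, and convert $\wt{\nu}_h$ back to $\nu_h$ via the recorded comparison $\wt{\nu}_h\le 2^{d}\nu_h$ --- is the standard mass-distribution argument in this discrete setting and reproduces the stated constant $2^{-d}a_1^{-1}$ exactly, with the $2^{-d}$ coming from the dyadic/general-cube comparison as you say. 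One small point to patch: the definition of $\wt{\nu}_h(A,S_n)$ places no upper bound on the sides of the covering cubes, so a competing dyadic cover could use a cube $B_i$ of side $2^{k_i}$ with $k_i>n$, a range on which the hypothesis is silent. This is harmless: since $A\subseteq S_n\subseteq V(0,2^{n})$ and $0\in\Z^d$, the hypothesis at $k=n$ gives $\mu(A\cap B_i)\le\mu(A)\le a_1\,h(1)\le a_1\,h(2^{k_i-n})$ by the monotonicity of $h$, so the per-cube bound persists for oversized cubes and the infimum over all dyadic covers is controlled. With that one line added, the proof is complete.
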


\begin{lemma}\label{Lem:Frostman}
Let $h \in {\cal H}$ and $A \subseteq S_n$. Then there is a measure $\mu$
on $A$ that satisfies
\begin{equation*}\label{Eq:Frostman0}
\mu(A) \ge \nu_{h} (A, S_n)\ \ \ \ \hbox{and }\ \ \
\mu(V(x, 2^k)) \le 2^d\, h\Big(\frac {2^k} {2^n}\Big)\
\hbox{ for all } 0 \le k \le n,\, x \in A.
\end{equation*}
\end{lemma}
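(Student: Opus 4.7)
The plan is to construct $\mu$ by linear programming duality on the laminar family of dyadic cubes intersecting $A$, reproducing Frostman's classical argument in the discrete setting. Because $S_n$ is a finite set of cardinality at most $2^{nd}$, the LP is finite-dimensional and strong duality poses no measure-theoretic subtlety.

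First I reduce to dyadic cubes. By the comparability $\nu_h(A, S_n) \le \tilde\nu_h(A, S_n)$ recorded just before the lemma, it suffices to produce a measure $\mu$ on $A \cap S_n$ with $\mu(A) \ge \tilde\nu_h(A, S_n)$ and $\mu(C) \le h\bigl(s(C)/2^n\bigr)$ for every dyadic cube $C \in \EuScript C_d$ of side at most $2^n$. Indeed, any cube $V(x, 2^k)$ with $0 \le k \le n$ has side $2^k$ and therefore meets at most $2^d$ dyadic cubes of side $2^k$, so summing the dyadic bound yields $\mu(V(x, 2^k)) \le 2^d h(2^k/2^n)$.

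Next I set up the primal LP
\[
\min \sum_{C} h\bigl(s(C)/2^n\bigr)\, x_C \quad \text{s.t.} \quad \sum_{C \ni p} x_C \ge 1\ \ (p \in A \cap S_n),\ \ x_C \ge 0,
\]
with $C$ ranging over dyadic cubes of side at most $2^n$. Since the constraint matrix is totally balanced on a laminar family, the optimum is attained integrally and equals $\tilde\nu_h(A, S_n)$. Its dual,
\[
\max \sum_{p} y_p \quad \text{s.t.} \quad \sum_{p \in C} y_p \le h\bigl(s(C)/2^n\bigr),\ \ y_p \ge 0,
\]
is exactly the problem of choosing a nonnegative measure on $A \cap S_n$ with the desired dyadic growth. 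Strong LP duality forces both optima to coincide, and any optimal $y$ delivers the required $\mu$.

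The main obstacle is establishing the strong duality / tree min-cut equality for the dyadic family, but this is classical and can alternatively be obtained by an elementary bottom-up greedy: assign each point of $A \cap S_n$ initial mass $h(1/2^n)$, propagate sums up the dyadic tree, and cap any cube $C$ whose accumulated mass would exceed $h(s(C)/2^n)$; an induction on tree depth shows the surviving total is no less than $\tilde\nu_h(A, S_n)$. Everything else—the reduction to dyadic cubes and the combinatorial factor of $2^d$ when passing to semi-dyadic cubes—is routine.
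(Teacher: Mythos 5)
Your argument is correct, but note that the paper itself gives no proof of this lemma: it is quoted verbatim from Theorem~4.2 of Barlow and Taylor \cite{BT92}, so there is no in-paper argument to compare against, and what you have written is essentially the standard proof of the discrete Frostman lemma (which is also what underlies the cited result). Your reduction is sound: $\nu_h\le\widetilde{\nu}_h$ handles the first conclusion, and a cube of side $2^k$ meets at most $2^d$ dyadic cubes of side $2^k$, which is exactly where the factor $2^d$ in the second conclusion comes from. The duality step is legitimate because the dyadic cubes meeting $A$ form a laminar family; the cleanest packaging is max-flow/min-cut on the dyadic tree with node capacities $h(s(C)/2^n)$ (singletons included, with capacity $h(2^{-n})$), where the max flow delivers the measure $\mu$ and the min node cut is precisely an optimal dyadic cover, so $\mu(A)=\widetilde{\nu}_h(A,S_n)\ge\nu_h(A,S_n)$. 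Two small points on your greedy alternative: ``capping'' a cube $C$ should be read as rescaling the restriction of the current measure to $C$ by the factor $h(s(C)/2^n)/\mu(C)$, since an arbitrary truncation could destroy the bounds already verified for subcubes of $C$; and the lower bound on the surviving mass is most transparently obtained not by induction on depth but by observing that every point of $A$ lies in at least one saturated cube (the singleton is saturated at the start), so the maximal saturated cubes form a disjoint cover of $A$ with $\sum_i h\bigl(s(C_i)/2^n\bigr)=\sum_i\mu(C_i)=\mu(A)$, which dominates the minimum over all covers.
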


\subsection{A SLLN for dependent events}

The increments of VSRW are not independent. For this reason, we here establish
a SLLN for dependent events which will be used in our proof of Theorem \ref{Th:main}
below.

\begin{lemma}\label{Lem:SLLN}
Suppose that $\{A_i\}, \{B_i\}$ are two sequences of events adapted to a (common)
filtration $\{\sF_i\}$ and are such that for some positive
constants $p, a$ and $\delta$
\begin{equation}\label{Eq:SLLN1}
   \P(A_{i+1} | \sF_{i})\geq p \ \ \mbox{ on event } B_i, \mbox{ and }\
  \P(B_i^c)\leq ae^{-\delta i}\  \mbox{ for all }\, i.
\end{equation}
Write $X_i= \I_{A_i}$, and $S_n = \sum_{i=1}^n X_i$. Then there exists $\vep>0$ such that
\[
  \liminf_{n \to \infty} \frac{S_n}{n} \geq  \vep \quad \mbox{ almost surely.}
\]
\end{lemma}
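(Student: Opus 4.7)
The plan is to compare $S_n$ with a telescoped martingale plus a linear ``compensator'', exploiting the fact that conditionally on $\sF_{i-1}$ one has a uniform lower bound on the chance of $A_i$ as long as $B_{i-1}$ occurs. Since the bad events $B_i^c$ have exponentially small probability, Borel--Cantelli shows that all but finitely many of the indicators $\I_{B_i}$ equal $1$.

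To implement this, define $T_i = \I_{A_i}\I_{B_{i-1}}$ (with the convention $B_0 = \Omega$) so that $T_i \le \I_{A_i}$ and hence $S_n \ge \sum_{i=1}^n T_i$. The hypothesis gives
\[
\E[T_i \mid \sF_{i-1}] = \I_{B_{i-1}}\,\P(A_i \mid \sF_{i-1}) \ge p\, \I_{B_{i-1}}.
\]
Write
\[
\sum_{i=1}^n T_i = M_n + \sum_{i=1}^n \E[T_i \mid \sF_{i-1}], \qquad
M_n := \sum_{i=1}^n \bigl(T_i - \E[T_i \mid \sF_{i-1}]\bigr),
\]
so $M_n$ is a martingale with increments bounded in absolute value by $1$.

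The main step is to show $M_n/n \to 0$ almost surely. By the Azuma--Hoeffding inequality, for every $\lambda > 0$,
\[
\P\bigl(|M_n| \ge \lambda n \bigr) \le 2 \exp\bigl(-\lambda^2 n / 2\bigr),
\]
which is summable in $n$, so Borel--Cantelli gives $\limsup_n |M_n|/n \le \lambda$ a.s.; letting $\lambda \downarrow 0$ through a countable sequence yields $M_n/n \to 0$ a.s. On the other hand, $\sum_i \P(B_i^c) \le \sum_i a e^{-\delta i} < \infty$, so by Borel--Cantelli $N := \sum_{i=0}^\infty \I_{B_i^c}$ is finite a.s., whence
\[
\sum_{i=1}^n \E[T_i \mid \sF_{i-1}] \ge p \sum_{i=1}^n \I_{B_{i-1}} \ge p(n - N).
\]

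Combining these displays,
\[
\frac{S_n}{n} \ge \frac{M_n}{n} + p\Bigl(1 - \frac{N}{n}\Bigr)
\xrightarrow[n \to \infty]{} p \quad \text{almost surely},
\]
so the conclusion holds with any $\vep \in (0, p)$. The only nontrivial piece is the martingale step, which is standard once the decomposition is set up; the rest is bookkeeping via Borel--Cantelli.
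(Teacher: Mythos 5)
Your proof is correct, and it takes a genuinely different route from the paper. The paper bounds the moment generating function $\E(e^{-tS_n})$ by a recursion: at each step it splits on $B_{n-1}$, uses $\E(e^{-tX_n}\mid\sF_{n-1})\le e^{-p(1-e^{-t})}<1$ on $B_{n-1}$ and absorbs the bad event via the additive error $\P(B_{n-1}^c)\le ae^{-\delta n}$, proving $\E(e^{-tS_n})\le ke^{-bn}$ by induction and finishing with an exponential Chebyshev bound and Borel--Cantelli; this yields some unspecified $\vep>0$ depending on the constants $k,b,t$. You instead truncate to $T_i=\I_{A_i}\I_{B_{i-1}}$, apply the Doob decomposition, control the martingale part by Azuma--Hoeffding, and dispose of the bad events once and for all by Borel--Cantelli (a.s.\ only finitely many $B_i^c$ occur). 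Your argument is more transparent in that it cleanly separates the fluctuation estimate from the handling of the exceptional events, and it gives the sharper conclusion $\liminf_n S_n/n\ge p$ a.s., i.e.\ any $\vep<p$ works; the paper's recursion is more self-contained (it does not invoke Azuma, though Azuma is itself an MGF bound, so the two are morally close) and only uses summability of $\P(B_i^c)$ through the explicit geometric decay. The only point to tidy is the convention $B_0=\Omega$ at $i=1$: if the hypothesis is not assumed for $i=0$, simply drop the first term, which costs at most $p$ in the sum $\sum_{i=1}^n\E[T_i\mid\sF_{i-1}]$ and does not affect the limit.
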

\begin{proof}\,
We first estimate the moment generating function of $S_n$. For any $t>0$,
\begin{equation}\label{eq:mgf_Sn_ind}
\aligned
\E(e^{-t S_n})&=\E \big( \E(e^{-t X_n} | \sF_{n-1}) \cdot e^{-t S_{n-1}}\big)\\
  &\leq
  \E\big(\E(e^{-t X_n} | \sF_{n-1})\cdot\I_{B_{n-1}} \cdot e^{-t S_{n-1}}\big)
  +\P(B_{n-1}^c).
\endaligned
\end{equation}
By using the first inequality in (\ref{Eq:SLLN1}) and the elementary inequality $1- x \le e^{-x}$
($x \ge 0$), we derive that
\begin{equation}\label{eq:mgf_Sn_bd}
\E\big(e^{-t X_n} | \sF_{n-1}\big)\leq q(t):= e^{-p(1-e^{-t})}<1 \q \mbox{ on event } B_{n-1}.
\end{equation}
Now choose  $k>0$ large enough and  $b>0$ small enough such that
\[
   b\leq \delta,\q ke^{-b}\geq 1, \q\mbox{and}\q q(t) e^b + \frac{ae^{\delta}}{k}\leq 1.
\]
We go on to show that
\begin{equation}\label{eq:mgf_Sn}
\E(e^{-t S_n}) \leq k e^{-b n},\q\mbox{for all } n.
\end{equation}
In fact, by the choices of $k$ and $b$, \eqref{eq:mgf_Sn} holds automatically for $n=1$.
Now suppose that it holds for $n-1$, then by \eqref{eq:mgf_Sn_ind} and \eqref{eq:mgf_Sn_bd}
and using induction one gets that
\[
\E(e^{-t S_n})\leq q(t)   \E(e^{-t S_{n-1}}) + \P(B_{n-1}^c)
  \leq q(t) k e^{-b (n-1)} + a e^{\delta}\cdot e^{-\delta n}.
\]
The last term is bounded by $k e^{-b n}$, by the choices of $k$ and $b$.

Once we have the bound \eqref{eq:mgf_Sn} for $\E(e^{-t S_n})$, the conclusion
then follows easily by using the Chebyshev's inequality and the Borel-Cantelli lemma.
\end{proof}

\section{Proofs of Theorems \ref{Th:main} and \ref{Th:main2}}

As in Barlow and Taylor \cite{BT92}, the proof of Theorem \ref{Th:main} is divided
into proving the upper bound $\dimp {\mathrm R} \le 2$\, $\P_0^\o$-a.s.
and the lower bound $\dim {\mathrm R} \ge 2$\, $\P_0^\o$-a.s., separately.
The upper bound is proved by using a first moment argument and the lower
bound is proved by using a ``mass distribution'' method. However, since there
are significant differences between VSRW and the strictly stable random walks in
Barlow and Taylor \cite{BT92}, some preparations are needed.

In the following we establish quenched results on hitting probability, sojourn time,
maximal inequality, and a zero-one law
for VSRW. These results may also be useful for studying other
properties of VSRW.

\subsection{Hitting probability estimates}

We start with the following lemma. Its proof is a slight modification
of that of Proposition 2.1 in Xiao \cite{X98} which is an extension of Theorem 1
in Khoshnevisan \cite{Kh97} for L\'evy processes.

\begin{lemma}\label{Lem:Hit0}
Let $\{X_t,\, t \ge 0, \P_x, x\in \Z^d\}$ be a time homogeneous (continuous time)
Markov chain. % which satisfies the strong Markov property.
Then for any $x, y \in \Z^d$, $b > a \ge 0$ and $r > 0$,
\begin{equation} \label{Eq:Hit0}
\begin{split}
\frac 1 2 \frac{\int_a^b \P_x \big( X_t \in C(y, r)\big)\, dt }
{\sup_{z \in C(y,r)} \int_0^b \P_z \big( X_t \in C(y, r)\big)\, dt}
& \le \P_x\Big( X_t \in C(y, r)\ \hbox{ for some } \, a \le t \le b\Big)\\
&\qquad \le \frac{\int_a^{2b-a} \P_x \big( X_t \in C(y, r)\big)\, dt}
{\inf_{z \in C(y,r)} \int_0^{b-a} \P_z\big( X_t \in C(y, r)\big)\, dt}.
\end{split}
\end{equation}
\end{lemma}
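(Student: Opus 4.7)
The plan is to prove both inequalities in (\ref{Eq:Hit0}) by a first-moment analysis of the occupation measure of $C(y,r)$, combined with the strong Markov property applied at the first entrance time. Define $T = \inf\{t \geq a : X_t \in C(y,r)\}$, so that the event of interest is exactly $\{T \leq b\}$, and write $L(I) = \int_I \I_{\{X_t \in C(y,r)\}}\, dt$ for the time $X$ spends in $C(y,r)$ over an interval $I \subset [0,\infty)$; by Fubini, $\E_x L([a,c]) = \int_a^c \P_x(X_t \in C(y,r))\, dt$ for every $c \geq a$.

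For the upper bound on $\P_x(T \leq b)$, I would observe that on $\{T \leq b\}$ the shifted interval $[T, T+(b-a)]$ is contained in $[a, 2b-a]$, hence
\[
L([a, 2b-a]) \;\geq\; \I_{\{T \leq b\}} \int_T^{T+(b-a)} \I_{\{X_t \in C(y,r)\}}\, dt.
\]
Taking $\E_x$, applying the strong Markov property at $T$, and bounding the resulting inner expectation below by an infimum over the starting point $z = X_T \in C(y,r)$ yields
\[
\int_a^{2b-a} \P_x(X_t \in C(y,r))\, dt \;\geq\; \P_x(T \leq b) \cdot \inf_{z \in C(y,r)} \int_0^{b-a} \P_z(X_s \in C(y,r))\, ds,
\]
which is exactly the upper bound in (\ref{Eq:Hit0}) after rearrangement.

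For the lower bound on $\P_x(T \leq b)$, note that the integrand in $L([a,b])$ vanishes on $\{T > b\}$, so $\E_x L([a,b]) = \E_x\bigl[L([T,b]);\, T \leq b\bigr] \leq \E_x\bigl[L([T, T+b]);\, T \leq b\bigr]$. Applying the strong Markov property at $T$ and passing to the supremum over $z \in C(y,r)$ gives
\[
\int_a^b \P_x(X_t \in C(y,r))\, dt \;\leq\; \P_x(T \leq b) \cdot \sup_{z \in C(y,r)} \int_0^b \P_z(X_s \in C(y,r))\, ds,
\]
which rearranges to the lower bound in (\ref{Eq:Hit0}); the prefactor $1/2$ appearing there is a conservative buffer (one could in fact dispense with it in this argument, but it is harmless and follows the normalization of Xiao \cite{X98} and Khoshnevisan \cite{Kh97}).

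The principal technical point, and the main place where care is required, is the application of the strong Markov property at $T$: one needs $T$ to be a stopping time with $X_T \in C(y,r)$ on $\{T < \infty\}$. Since the state space is discrete and the sample paths of $X$ are right-continuous (in particular for VSRW, whose holding times are exponential with finite rates $q_x = \mu_x$), this is standard. I would verify measurability of $T$ and right-continuity of $X$ explicitly at the outset, and then execute the two first-moment comparisons above. The fact that the chain lives on $\Z^d$ rather than on $\R^d$ in fact simplifies matters compared to the L\'evy-process setting of \cite{X98,Kh97}, since one avoids boundary-hitting subtleties.
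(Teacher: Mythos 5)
Your argument is correct, and for the upper bound it is exactly the standard route that the paper (which gives no proof of its own, deferring to Proposition 2.1 of Xiao \cite{X98} and Theorem 1 of Khoshnevisan \cite{Kh97}) has in mind: first-entrance time $T$, strong Markov property at $T$, and a uniform lower bound on the expected sojourn after $T$. Where you genuinely diverge is in the lower bound. The cited proofs obtain it by a second-moment (Cauchy--Schwarz/Paley--Zygmund) argument: writing $J=\int_a^b \I_{\{X_t\in C(y,r)\}}\,dt$, one bounds $\E_x J^2 \le 2\,\E_x J\cdot \sup_{z\in C(y,r)}\int_0^b \P_z(X_s\in C(y,r))\,ds$ by splitting the double integral over $\{s>t\}$ and applying the Markov property at $t$, and then uses $\P_x(J>0)\ge (\E_x J)^2/\E_x J^2$; the factor $1/2$ in \eqref{Eq:Hit0} is precisely the price of symmetrizing the double integral. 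You instead run the first-entrance decomposition in both directions, bounding $\E_x L([a,b])$ above by $\P_x(T\le b)$ times the supremum of the expected occupation over a window of length $b$ started from a point of $C(y,r)$. This is valid (the identity $\E_x L([a,b])=\E_x[L([T,b]);T\le b]$ and the inclusion $[T,b]\subseteq[T,T+b]$ are both correct, and $X_T\in C(y,r)$ on $\{T<\infty\}$ by right-continuity on a discrete state space), and it buys you a cleaner, more unified proof that in fact delivers the lower bound \emph{without} the factor $1/2$ --- a strictly stronger statement, which of course implies \eqref{Eq:Hit0} as written. The only point to be scrupulous about, which you flag, is that $T$ is a stopping time and that the chain is strong Markov; both are standard for a right-continuous jump process on a countable state space with finite jump rates, as is the case for the VSRW.
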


Observe that, if $\int_0^\infty \P_x \big(X_s \in C(y, r)\big)ds < \infty$,
then we can take $a=0$ and $b=\infty$ in Lemma \ref{Lem:Hit0}.

Now we apply Lemma \ref{Lem:Hit0} to derive
the following hitting probability estimates for the VSRW~$X$.

\begin{lemma}\label{Lem:Hit}
Assume that $d \ge 3$ and $\PP\big(\mu_e \ge 1\big)= 1$. Then $\PP$-a.s.
for all $n$ large enough,
$r \in [n^{3d/2},\ 2^{n-1}]$, $y \in V(0, 2^n)$, and all $x\in \Z^d$
such that $\d{x-y}\ge 2 r$
we have
\begin{equation} \label{Eq:Hit}
\P^\o_x\Big( X_t \in C(y, r)\ \hbox{ for some } \, t > 0\Big)
\asymp \bigg(\frac r {|x-y|}\bigg)^{d-2}.
\end{equation}
\end{lemma}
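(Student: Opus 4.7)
Since $X$ is transient in $d\ge 3$, we have $\int_0^\infty \P^\omega_x(X_t \in C(y,r))\,dt = \sum_{z \in C(y,r)} g^\omega(x,z) < \infty$, so we may invoke Lemma \ref{Lem:Hit0} with $a=0$ and $b=\infty$. The upper and lower bounds in \eqref{Eq:Hit0} then reduce the problem to showing
\[
N := \sum_{z \in C(y,r)} g^\omega(x,z) \;\asymp\; \frac{r^d}{|x-y|^{d-2}}, \qquad D(z) := \sum_{w \in C(y,r)} g^\omega(z,w) \;\asymp\; r^2
\]
uniformly for $z \in C(y,r)$, since then the ratio $N/D(z)$ is comparable to $(r/|x-y|)^{d-2}$. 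The standing setup is that, since $y \in V(0,2^n)$ and $r \le 2^{n-1}$, every $z,w \in C(y,r)$ satisfies $\d{z},\d{w}\le 2^n$, so \eqref{Eq:n0} gives $U_z, U_w \le c_{11}n^3$ for all $n \ge n_0(\omega)$; the assumption $r \ge n^{3d/2}$ with $d\ge 3$ then guarantees $r \ge c_{11}n^3$ for large~$n$.

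\textbf{Estimate of $N$ (numerator).} For $z \in C(y,r)$ we have $\d{x-z}\ge \d{x-y}-\d{y-z} \ge 2r - r = r \ge c_{11}n^3 \ge U_z \ge U_x \wedge U_z$, so Lemma \ref{Lem:BD}(d) applies and yields $g^\omega(x,z) \asymp |x-z|^{-(d-2)}$. Moreover $\d{x-y}/2 \le \d{x-z} \le \frac{3}{2}\d{x-y}$, so $|x-z|\asymp|x-y|$. Summing the $\asymp r^d$ terms gives $N \asymp r^d/|x-y|^{d-2}$.

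\textbf{Estimate of $D(z)$ (denominator).} For the upper bound, split the sum over $w \in C(y,r)$ according to whether $|z-w|< c_{11}n^3$ or $|z-w|\ge c_{11}n^3$. In the first regime, Lemma \ref{Lem:BD}(e) gives $g^\omega(z,w)\le c_{10}$ on $O(n^{3d})$ terms; since $r^2 \ge n^{3d}$, this is $\lesssim r^2$. In the second regime Lemma \ref{Lem:BD}(d) applies and
\[
\sum_{\substack{w \in C(y,r)\\ |z-w|\ge c_{11}n^3}} \frac{c_9}{|z-w|^{d-2}} \;\lesssim\; \sum_{k=1}^{O(r)} \frac{k^{d-1}}{k^{d-2}} \;\asymp\; r^2.
\]
For the matching lower bound we use Lemma \ref{Lem:BD}(c) instead. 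For $t \in [r^2,2r^2]$ we have $t \ge U_z^2$ (since $U_z^2 \le c_{11}^2 n^6 \ll r^2$) and $t \ge |z-w|^{1+1/3}$ for all $w \in C(y,r)$ (since $|z-w|\le \sqrt{d}\,r$ and $(\sqrt{d}\,r)^{4/3} \le r^2$ for $n$ large), whence
\[
p_t^\omega(z,w) \;\ge\; c_6\, t^{-d/2} \exp\bigl(-c_7\, |z-w|^2/t\bigr) \;\ge\; c'\, r^{-d}
\]
uniformly on $w \in C(y,r)$. Summing over the $\asymp r^d$ points of $C(y,r)$ yields $\P^\omega_z(X_t \in C(y,r)) \ge c''$, and integrating $t$ over $[r^2, 2r^2]$ gives $D(z) \gtrsim r^2$.

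\textbf{Main obstacle.} The delicate point is the precise matching between the cutoff $c_{11}n^3$, beyond which the sharp Green's function bound Lemma \ref{Lem:BD}(d) holds, and the scale $r$: the ``near'' contributions, controlled only by the weaker bound $g^\omega \le c_{10}$ from Lemma~\ref{Lem:BD}(e), must be absorbed into the $r^2$ main term. This is exactly what the hypothesis $r \ge n^{3d/2}$ (together with $d\ge 3$) is designed to ensure, both in the denominator upper bound and in verifying the regime conditions $t\ge U_z^2$, $t \ge |z-w|^{1+\eta}$ for the transition density lower bound.
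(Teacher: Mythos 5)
Your proof is correct and follows the same overall route as the paper's: both invoke Lemma \ref{Lem:Hit0} with $a=0$, $b=\infty$ and reduce the claim to showing that the numerator $\sum_{w\in C(y,r)}g^\o(x,w)$ is $\asymp r^d/|x-y|^{d-2}$ while the denominator $\sum_{w\in C(y,r)}g^\o(z,w)$ is $\asymp r^2$ uniformly over $z\in C(y,r)$. Your numerator estimate and your denominator upper bound (splitting at the scale $c_{11}n^3$, using part (e) of Lemma \ref{Lem:BD} for the near terms and part (d) for the far terms, and absorbing the $n^{3d}$ near-diagonal contribution because $r\ge n^{3d/2}$) coincide with the paper's. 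The one place you genuinely diverge is the denominator lower bound: the paper stays at the Green's function level, summing the lower bound of Lemma \ref{Lem:BD}(d) over the annulus $\Gamma=\{w\in C(y,r):\ r/4\le|w-z|\le 3r/4\}$, which contains $\asymp r^d$ points all at distance $\asymp r$ from $z$ and hence contributes at least $c^{-1}r^d\cdot r^{-(d-2)}=c^{-1}r^2$; you instead integrate the heat kernel lower bound of Lemma \ref{Lem:BD}(c) over the time window $t\in[r^2,2r^2]$, after verifying the regime condition $t\ge U_z^2\vee|z-w|^{1+1/3}$. Both arguments are valid and of comparable length; the paper's is marginally more economical in that it reuses the same estimate (d) already needed elsewhere, whereas yours requires the extra checks $U_z^2\le c_{11}^2n^6\le r^2$ and $(\sqrt d\,r)^{4/3}\le r^2$, which you carry out correctly and which are indeed guaranteed by the hypothesis $r\ge n^{3d/2}$.
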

Here and in the sequel, $f \asymp g$ means that the ratio $f/g$
is bounded from below and above by positive and finite constants
which are independent of the variables involved ($x, y$ and $r$ in this case).

\begin{proof}\,
We will apply Lemma \ref{Lem:Hit0} with $a=0$ and $b=\infty$.
We first consider the denominators in (\ref{Eq:Hit0}) and show that
there exists a constant  $c>1$ such that
\begin{equation}\label{Eq:Hit3}
c^{-1} r^2 \le \int_0^\infty \P_z^\o \big(X_s \in  C(y, r)\big)\,ds \le c \,r^2
\end{equation}
for all $z \in C(y, r)$
and for all $y$ and $r$ that we consider.

By Fubini's theorem, we write the integral in \eqref{Eq:Hit3} in terms of the
Green's function of $X$:
\begin{equation}\label{Eq:Hit4}
\begin{split}
\int_0^\infty \P_z^\o \big(X_s \in  C(y, r)\big)\,ds &= \sum_{w \in C(y, r)} g^\o(z, w). \\
\end{split}
\end{equation}

By \eqref{Eq:n0}, for all
$n$ sufficiently large and for all $r\leq 2^{n-1}$,
\begin{equation}\label{eq:sep}
\max_{y \in V(0, 2^{n}),\ z \in C(y, r)}U_z \le c_{11}\, n^3 \leq n^{3d/2}/4.
\end{equation}
Hence we can apply (d)
and (e)
of Lemma \ref{Lem:BD} to (\ref{Eq:Hit4}) and obtain that for
every $z \in C(y, r)$ with $r \ge n^{3d/2}$,
\begin{equation}\label{Eq:Hit5}
\begin{split}
\int_0^\infty \P_z^\o \big(X_s \in  C(y, r)\big)\,ds
&\le \sum_{\{|w-z|\le c_{11} n^3\}} c_{10}  + \sum_{\{c_{11} n^3
\le |w-z|\le \sqrt{d}\, r\}} \frac{c_9} {|z- w|^{d-2}} \\
&\le  c\, \big( n^{3d} +  r^2\big) \le c\, r^2.
\end{split}
\end{equation}
This proves the upper bound in \eqref{Eq:Hit3}. On the other hand,
since we only consider large $r$'s,
for any $z\in C(y,r)$,
$$
\#\Big\{w \in C(y, r): \frac r 4 \le |w-z|\le \frac{3r} 4 \Big\} \asymp r^d.
$$
Denote the above set by $\Gamma$. Then by using \eqref{Eq:Hit4},
\eqref{eq:sep} and (d) of Lemma \ref{Lem:BD} again, we have
\begin{equation}\label{Eq:Hit5b}
\begin{split}
\int_0^\infty \P_z^\o \big(X_s \in  C(y, r)\big)\,ds \ge
\sum_{w \in \Gamma} \frac{c_8} {|z- w|^{d-2}} \ge  c^{-1}\, r^2,
\end{split}
\end{equation}
which proves the lower bound in \eqref{Eq:Hit3}.

To estimate the numerators in (\ref{Eq:Hit0}),
noting that for all $w\in C(y,r)$, since
$\d{x-y}\geq 2r$ and hence
$|x-w|\geq \d{x-w}\geq r\geq \max_{w\in C(y, r)} U_w$,
we can use again (d) of Lemma
\ref{Lem:BD} to get
\begin{equation}\label{Eq:Hit6}
\begin{split}
\int_0^{\infty} \P_x^\o \big(X_t \in C(y, r)\big)\, dt
&= \sum_{w \in C(y, r)} g^\o(x, w)  \\
&\asymp \sum_{w \in C(y, r)} \frac{1} {|x- w|^{d-2}} \asymp \frac{r^{d}}{|x-y|^{d-2}},
\end{split}
\end{equation}
where in the last step we used again that $|x-y|\geq \d{x-y} \geq 2r.$
Hence (\ref{Eq:Hit}) follows from Lemma \ref{Lem:Hit0}, \eqref{Eq:Hit3} and
\eqref{Eq:Hit6}.
\end{proof}

Similarly, for the discrete time VSRW $\{\wh X_n,\,n\geq 0\}$, we have the
following estimate regarding the hitting probabilities.

\begin{lemma}\label{Lem:hit_pt}
Assume that $d \ge 3$ and $\PP(\mu_e \ge 1)= 1$. Then $\PP$-a.s. for all
$n$ large enough, for all $x\in V(0,2^{n-2})$, and for all $y\in S_n
(=V(0,2^{n})\backslash V(0,2^{n-1}))$,
\begin{equation}
\P^\o_x\Big( \wh X_i = y\ \hbox{ for some } \, i \geq 0\Big)
\geq \frac{c_8}{c_{10}|x-y|^{d-2}}.
\end{equation}
\end{lemma}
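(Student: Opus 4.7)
The plan is to use the classical Green-function identity for the discrete-time chain $\wh X$: by the strong Markov property applied at the first hitting time $T_y = \inf\{i\ge 0 : \wh X_i = y\}$,
\[
\wh g^\o(x,y) = \P^\o_x(T_y <\infty)\cdot \wh g^\o(y,y),
\]
so rearranging gives
\[
\P^\o_x(T_y<\infty) = \frac{\wh g^\o(x,y)}{\wh g^\o(y,y)}.
\]
The lemma then reduces to bounding the numerator from below by $c_8/|x-y|^{d-2}$ and the denominator from above by $c_{10}$, both via Lemma \ref{Lem:green_discrete}.

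To apply Lemma \ref{Lem:green_discrete}(d), I need $|x-y|\geq U_x\wedge U_y$. First I would check the geometric separation: for $x\in V(0,2^{n-2})$ every coordinate satisfies $|x_i|<2^{n-3}$, while $y\in S_n = V(0,2^n)\setminus V(0,2^{n-1})$ has at least one coordinate with $|y_i|\geq 2^{n-2}$. Hence
\[
|x-y|\ge \d{x-y}\ge 2^{n-2}-2^{n-3} = 2^{n-3}.
\]
Meanwhile, both $x$ and $y$ lie in $V(0,2^n)$, and by \eqref{Eq:n0} (which holds $\PP$-a.s.\ for all $n\geq n_0(\o)$), $U_x\vee U_y \le c_{11} n^3$, which is eventually much smaller than $2^{n-3}$. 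Thus for all $n$ large enough (depending on $\o$), the hypothesis $|x-y|\ge U_x\wedge U_y$ is satisfied uniformly over the claimed range of $x$ and $y$.

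Given this, Lemma \ref{Lem:green_discrete} (which transfers (d) and (e) of Lemma \ref{Lem:BD} to $\wh g^\o$) yields $\wh g^\o(x,y)\geq c_8/|x-y|^{d-2}$ and $\wh g^\o(y,y)\leq c_{10}$, and dividing gives the desired lower bound $c_8/(c_{10}|x-y|^{d-2})$. There is no real obstacle here; the only point requiring any care is the verification that the separation $2^{n-3}$ dominates the polylogarithmic quantity $c_{11}n^3$, which is automatic for $n$ sufficiently large on the full-measure event where \eqref{Eq:n0} holds.
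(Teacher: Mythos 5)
Your proof is correct and is essentially identical to the paper's: the paper also applies the strong Markov property to write $\wh{g}^\o(x,y)= \P^\o_x(\wh X_i = y \hbox{ for some } i\geq 0)\cdot \wh{g}^\o(y,y)$ and then invokes Lemma \ref{Lem:green_discrete} together with \eqref{Eq:n0}. Your explicit check that $\d{x-y}\ge 2^{n-3}\gg c_{11}n^3$ just spells out the detail the paper leaves implicit.
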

\begin{proof}\
By the strong Markov property,
\[
   \wh{g}^\o(x,y)= \P^\o_x\Big( \wh X_i = y\ \hbox{ for some } \, i
   \geq 0\Big)\cdot \wh{g}^\o(y,y).
\]
The conclusion then follows from Lemma \ref{Lem:green_discrete} and \eqref{Eq:n0}.
\end{proof}

Our next lemma, which is similar to Proposition 8.1 in Barlow and Taylor \cite{BT92},
establishes a connection between the capacity Cap$_{g^\o}$ associated
with VSRW $X$ and Hausdorff measures.
\begin{lemma}\label{CapHau}
Assume that $d \ge 3$ and $\PP(\mu_e \ge 1)= 1$. Then there exists a constant
$c_{13} \ge 1$ such that for $\PP$-almost every
$\o \in \Omega$, all $n$ large enough and sets $A \subseteq S_n$,
\begin{equation}\label{Eq:CapHau}
c_{13}^{-1}\, 2^{n(d-2)}\, \nu_{h_2} (A, S_n) \le {\rm Cap}_{g^\o}(A) \le
c_{13}\, 2^{n(d-2)}\, \nu_{h_1} (A, S_n).
\end{equation}
In the above,
\[
h_1(r) =
\left\{
 \aligned
 &r^{d-2}\bigg(\log \Big(\frac 1 r\Big)\bigg)^{3d(d-2)/2}, \q & \mbox{if } r \leq r_0;\\
 & r^{d-2}\bigg(\log \Big(\frac 1 r_0\Big)\bigg)^{3d(d-2)/2}, \q & \mbox{if } r \geq r_0,
 \endaligned
\right.
\]
where $r_0=\exp(-3d/2)$ is such that $h_1(\cdot)$ is monotone increasing;
and
\[
h_2(r) = r^{d-2}\bigg(\log \Big(\frac 1 r\Big)\bigg)^{- c_{14}},
\]
where
$c_{14} > (3/\log 2 + 1)(d-2)$
is a constant.
\end{lemma}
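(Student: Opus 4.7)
My plan follows the structure of Proposition 8.1 in Barlow and Taylor \cite{BT92}, adapted to the quenched setting, where the Green's function estimates in Lemma \ref{Lem:BD} are clean only on scales exceeding the random $U$-scale, which is bounded by $c_{11}n^3$ uniformly on $V_n$ for $n \ge n_0(\o)$ by \eqref{Eq:n0}. The logarithmic factors in both $h_1$ and $h_2$ are exactly what is needed to absorb this microscopic bad scale.

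For the upper bound, I first note that Lemma \ref{Lem:Cap} gives the sub-additivity ${\rm Cap}_{g^\o}(A \cup B) \le {\rm Cap}_{g^\o}(A) + {\rm Cap}_{g^\o}(B)$, since any admissible measure on $A \cup B$ restricts to admissible measures on the two pieces. I then fix an arbitrary cover $A \cap S_n \subseteq \bigcup_i B_i$ by cubes $B_i$ of side $r_i$ and reduce to the single-cube estimate ${\rm Cap}_{g^\o}(B) \le c\, 2^{n(d-2)} h_1(r/2^n)$ for $B = C(y, r)$. Given any measure $\sigma$ on $B$ with $g^\o \sigma \le 1$,
$$\sigma(B)\cdot\min_{w \in B}\sum_{z \in B} g^\o(z, w) \;\le\; \sum_{z \in B} (g^\o \sigma)(z) \;\le\; r^d.$$
When $r \ge c_{11} n^3$, the argument of \eqref{Eq:Hit5b} based on Lemma \ref{Lem:BD}(d) bounds the inner sum below by $c r^2$, giving ${\rm Cap}_{g^\o}(B) \le c r^{d-2}$. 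When $r < c_{11} n^3$, integrating the Gaussian lower bound of Lemma \ref{Lem:BD}(c) yields $g^\o(z,w) \ge c U_w^{-(d-2)}$ whenever $|z-w| \le U_w$, and combining with \eqref{Eq:n0} gives the universal bound ${\rm Cap}_{g^\o}(B) \le c\, n^{3(d-2)}$. A direct arithmetic check, using $\log(2^n/r) \ge cn$ for $r \le c_{11}n^3$ and $3(d-2) \le 3d(d-2)/2$ for $d \ge 2$, confirms both of these are dominated by $c\, 2^{n(d-2)} h_1(r/2^n)$ in any dimension $d \ge 3$. Summing over the cover and taking the infimum over covers finishes this half.

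For the lower bound I apply Lemma \ref{Lem:Frostman} with $h = h_2$ to obtain a measure $\mu$ on $A \cap S_n$ with $\mu(A) \ge \nu_{h_2}(A, S_n)$ and $\mu(V(x, 2^k)) \le 2^d h_2(2^k/2^n)$ for all $x \in A$ and $0 \le k \le n$. By Lemma \ref{Lem:Cap} it is enough to prove $\max_{x \in A}(g^\o\mu)(x) \le c\, 2^{-n(d-2)}$, as then $\mu/(c\,2^{-n(d-2)})$ is admissible and ${\rm Cap}_{g^\o}(A) \ge c^{-1} 2^{n(d-2)}\nu_{h_2}(A, S_n)$. I split the convolution at the scale $c_{11}n^3$: on $|y - x| < c_{11} n^3$ the uniform bound $g^\o \le c_{10}$ of Lemma \ref{Lem:BD}(e) and the mass estimate $\mu(V(x, c_{11}n^3)) \le c\, 2^{-n(d-2)} n^{3(d-2) - c_{14}}$ give a contribution of order $2^{-n(d-2)}$ provided $c_{14} \ge 3(d-2)$; on $|y-x| \ge c_{11}n^3$ I decompose into dyadic shells $V(x, 2^{k+1}) \setminus V(x, 2^k)$, use $g^\o(x,y) \le c_9 |x-y|^{-(d-2)}$ from Lemma \ref{Lem:BD}(d), and bound the $k$-th shell's contribution by $c\, 2^{-n(d-2)}(n-k)^{-c_{14}}$; summability in $k$ uses $c_{14} > 1$. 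The hypothesis $c_{14} > (3/\log 2 + 1)(d-2)$ guarantees both conditions.

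The main obstacle is the simultaneous presence of the random $U$-scale on both sides. At scales below $c_{11}n^3$ neither a sharp capacity estimate nor a clean pointwise Green's function bound is available, forcing the use of surrogate bounds that are polynomially too large in $n$. The logarithmic corrections in $h_1$ and $h_2$ are calibrated to exactly offset this polynomial loss; verifying that the specific exponent $3d(d-2)/2$ in $h_1$ and the threshold on $c_{14}$ in $h_2$ work for every $d \ge 3$ is the most delicate part of the calculation, and in particular the case $d = 3$ is tight since the trivial bound ${\rm Cap}_{g^\o}(B) \le r^d$ alone does not suffice.
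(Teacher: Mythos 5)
Your proof is correct, and its overall architecture (cover plus subadditivity plus a single-cube capacity bound for the upper half; Frostman's lemma, a dyadic-shell estimate of the potential, and the dual characterization \eqref{eq:capacity_max} for the lower half) coincides with the paper's; your lower-bound argument is essentially identical to the one in the text, including the split at the scale $c_{11}n^3$ dictated by \eqref{Eq:n0} and the two roles of $c_{14}$. Where you genuinely diverge is in the single-cube capacity upper bound. The paper obtains ${\rm Cap}_{g^\o}(B)\le c\,r^{d-2}$ for $r\ge n^{3d/2}$ by combining the hitting-probability estimate of Lemma \ref{Lem:Hit} with the representation \eqref{Eq:Hittcap}--\eqref{Eq:Hittcap2}, and disposes of small cubes by \emph{enlarging} them to side $[n^{3d/2}]+1$ and reusing the large-cube bound, which is precisely what produces the exponent $3d(d-2)/2$ in $h_1$. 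You instead work directly with the dual characterization from Lemma \ref{Lem:Cap}: for large cubes you average $g^\o\sigma\le 1$ over $B$ and invoke the lower bound \eqref{Eq:Hit5b}, and for small cubes you get the pointwise lower bound $g^\o(z,w)\ge c\,n^{-3(d-2)}$ (via Lemma \ref{Lem:BD}(c) on-diagonal when $|z-w|\le U_z$, and Lemma \ref{Lem:BD}(d) otherwise --- you should state the second case explicitly), yielding ${\rm Cap}_{g^\o}(B)\le c\,n^{3(d-2)}$, which is in fact a sharper bound than the paper's $c\,n^{3d(d-2)/2}$ and is still absorbed by $h_1$. Your route has the advantage of bypassing Lemma \ref{Lem:Hit} entirely and making the role of the dual characterization symmetric between the two halves of the proof; the paper's route has the advantage of recycling an estimate it needs elsewhere. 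Your closing observation that the trivial bound ${\rm Cap}_{g^\o}(B)\le r^d$ fails to suffice precisely when $d=3$ is correct and explains why the refined small-cube estimate cannot be dispensed with.
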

\begin{proof}\,
Let $\{B_i, 1 \le i \le m\}$ be an optimal cover for $A$, in the sense that
\[
  \nu_{h_1} (A, S_n) =  \sum_{i=1}^m h_1\bigg(\frac{s(B_i)} {2^n}\bigg).
\]
Write $r_i=s(B_i)$. If $r_i \ge  n^{3d/2}$, then by Lemma \ref{Lem:Hit}, (d) of
Lemma \ref{Lem:BD} and \eqref{Eq:n0}, and  the definitions
\eqref{Eq:Hittcap} and \eqref{Eq:Hittcap2} of capacity, one can get that
${\rm Cap}_{g^\o}(B_i\cap S_n) \le c r_i^{d-2}$. On the other hand, if
$r_i <  n^{3d/2}$, then we enlarge the cube so that its side $r_i' =
[n^{3d/2}] + 1$, which has capacity bounded by
$ c (r_i')^{d-2}\leq c n^{3d(d-2)/2}\leq c (\log (2^n/r_i))^{3d(d-2)/2}.$
It follows that for some constant $c_{13}>0$, for all $n$ sufficiently large,
\[
\begin{split}
{\rm Cap}_{g^\o}(A) \le \sum_{i=1}^m {\rm Cap}_{g^\o}(B_i\cap S_n)
%\le c\, \sum_{i=1}^m 2^{r_i'(d-2)}\\
\le c_{13}\, 2^{n(d-2)}  \sum_{i=1}^m h_1\bigg(\frac{r_i} {2^n}\bigg)
= c_{13}\, 2^{n(d-2)}\nu_{h_1} (A, S_n).
\end{split}
\]

Next we prove the lower bound in (\ref{Eq:CapHau}). By Lemma \ref{Lem:Frostman} there
is a measure $\mu$ on $A$ such that
\begin{equation}\label{Eq:Frostman2}
\mu(A) \ge \nu_{h_2} (A, S_n) \ \ \hbox{ and }\ \
\mu(V(x, 2^k)) \le 2^d\, h_2\Big(\frac {2^k} {2^n}\Big)\ \
\hbox{ for all } 0 \le k \le n,\ x \in A.
\end{equation}
For any $x \in A$, let $S_k(x) = V(x, 2^k)\backslash V(x, 2^{k-1})$. Let
$c_0=3/\log 2 + 1$ so that for all $n$ large enough,
$2^{c_0\log n}\geq c_{12} n^3$. Then by (d) and (e) of Lemma \ref{Lem:BD} and \eqref{Eq:n0},
\begin{equation}\label{Eq:CapHau2}
\begin{split}
(g^\o \mu)(x)
& \leq \sum_{k=0}^{2n} \sum_{y \in A\cap S_k(x)} g^\o(x, y) \mu(y)\\
&\le \sum_{k=0}^{c_0\log n} c_{10}\mu\big(S_k(x)\big) +
\sum_{k= 1+c_0\log n}^{2n} \frac{c_{9}}{2^{k(d-2)}} \mu\big(S_k(x)\big).
\end{split}
\end{equation}
By using the second inequality in \eqref{Eq:Frostman2},
and noting that $c_{14} > c_0(d-2)$
one can verify that
\[
\sum_{k=0}^{c_0\log n} c_{10}\mu\big(S_k(x)\big) \le c \, 2^{-n(d-2)} \ \hbox{ and }\
\sum_{k= 1+c_{0}\log n}^{2n} \frac{c_{9}}{2^{k(d-2)}} \mu\big(S_k(x)\big) \le c \, 2^{-n(d-2)}.
\]
This and (\ref{Eq:CapHau2}) imply $(g^\o \mu)(x) \le c_{15} 2^{-n(d-2)}$ for all $x \in A$.
Now we take the measure $\mu' = c_{15}^{-1} 2^{n(d-2)}\mu$. Then $(g^\o \mu')(x) \le 1$
for all $x \in A$. Therefore, by (\ref{eq:capacity_max}) and the first inequality
in \eqref{Eq:Frostman2},
\[
{\rm Cap}_{g^\o}(A) \ge \mu' (A) = c_{15}^{-1} 2^{n(d-2)}\mu(A) \ge c_{15}^{-1}
2^{n(d-2)}\,\nu_{h_2} (A, S_n).
\]
This proves the lower bound in (\ref{Eq:CapHau}).
\end{proof}

\subsection{Tail probability of the sojourn measure for the discrete time VSRW}

In this subsection we focus on the discrete time VSRW $\{X_n,\, n=0,1,\ldots,\}$.
For this process, for any $F\subseteq \Z^d$, the sojourn time of $F$
is defined by
\begin{equation}\label{Def:Sojourn}
T(F) = \#\{n\geq 0: X_n\in F\}.
\end{equation}

The following lemma is an analogue of Lemma 7.6 in Barlow and Taylor \cite{BT92}
for random walks, which holds for any time-homogeneous Markov chains and can
be proved similarly as in Lemma 3.1 in Pruitt and Taylor \cite{PT69}.
\begin{lemma}\label{Lem:Sojourn}
If $F\subseteq \Z^d$ satisfies
\begin{equation}\label{dfn:max_sojourn}
M(F):=\sup_{y\in F} \E_y (T(F)) \in (0,\infty).
\end{equation}
Then for every $\delta \in (0, 1)$
and any $\lambda\geq 0$, and for all $x \in \Z^d$,
\begin{equation*}\label{Eq:Sojourn}
\P_x\Big( T\big(F\big) \ge \lambda  M(F) \Big) \le e^{-\de \lambda}.
\end{equation*}
\end{lemma}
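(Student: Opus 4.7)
The strategy is to reduce to the case where the chain starts inside $F$, then iterate the strong Markov property at the successive times when the accumulated sojourn in $F$ crosses integer multiples of a suitably chosen threshold $aM(F)$.

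First, I would dispose of the case $x \notin F$. Let $\sigma = \inf\{n \ge 0:\ X_n \in F\}$. On $\{\sigma = \infty\}$ one has $T(F) = 0$, and on $\{\sigma < \infty\}$ the strong Markov property at $\sigma$ gives that, conditionally on $\F_\sigma$, $T(F)$ has the same law as $T(F)$ under $\P_{X_\sigma}$ with $X_\sigma \in F$. Hence it suffices to establish the tail bound for starting points $y \in F$. For such $y$ the definition of $M(F)$ yields $\E_y(T(F)) \le M(F)$, and Chebyshev's inequality gives
\[
\P_y\big(T(F) \ge a M(F)\big) \le 1/a, \qquad \forall\, a > 0.
\]

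Next, the iteration. Fix $a > 1$ (to be optimized at the end in terms of $\delta$), and for $k \ge 1$ define the stopping time
\[
\tau_k = \inf\bigl\{ n \ge 0:\ \#\{0 \le m \le n:\ X_m \in F\} \ge k a M(F)\bigr\}.
\]
Then $\{\tau_k < \infty\} = \{T(F) \ge k a M(F)\}$, and on this event $X_{\tau_k} \in F$. Applying the strong Markov property at $\tau_k$ together with the one-step Chebyshev bound from the previous paragraph,
\[
\P_x\bigl(T(F) \ge (k+1) a M(F) \,\big|\, \F_{\tau_k}\bigr)\, \mathbf{1}_{\tau_k < \infty}
\le \P_{X_{\tau_k}}\!\bigl(T(F) \ge a M(F)\bigr)\, \mathbf{1}_{\tau_k < \infty} \le 1/a.
\]
Taking expectations and iterating gives the geometric bound
\[
\P_x\bigl(T(F) \ge k a M(F)\bigr) \le a^{-k}, \qquad k = 0,1,2,\ldots
\]

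Finally, for arbitrary $\lambda \ge 0$ I would set $k = \lfloor \lambda/a \rfloor$, which satisfies $k \ge \lambda/a - 1$, to obtain
\[
\P_x\bigl(T(F) \ge \lambda M(F)\bigr) \le a^{\,1 - \lambda/a} = a \cdot \exp\!\Bigl(-\frac{\log a}{a}\lambda\Bigr).
\]
Given $\delta \in (0,1)$, I would choose $a$ so that $\log(a)/a$ strictly exceeds $\delta$ (possible since $\sup_{a>1} \log(a)/a = 1/e$, so the argument naturally yields this for $\delta < 1/e$; to reach all $\delta \in (0,1)$ one must sharpen the one-step estimate, e.g.\ by using the moment bound $\E_y(T^k) \le k!\, M(F)^k$ derived by the same iteration and then a Chernoff-type estimate $\E_y(e^{\theta T/M}) \le 1/(1-\theta)$ for $\theta < 1$), and absorb the constant prefactor $a$ into the exponential at the cost of restricting to $\lambda$ larger than a $\delta$-dependent threshold, the complementary small-$\lambda$ range being trivial since the probability is at most $1$.

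The main obstacle is matching the precise form $e^{-\delta \lambda}$ for every $\delta \in (0,1)$: the naive iteration only gives decay rate at most $1/e$, so to recover the full range of $\delta$ one must upgrade the one-step Chebyshev inequality to an exponential-moment estimate $\E_y(e^{\theta T(F)/M(F)}) \le (1-\theta)^{-1}$ for $y \in F$, obtained by the same strong Markov iteration applied to $\E_y(T^k)$.
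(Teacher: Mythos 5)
Your proposal is essentially correct, and its closing paragraph is in fact the paper's entire proof: the authors reduce the lemma to the factorial moment bound $\E_x\big(T(F)^m\big)\le m!\,(M(F))^m$, proved by induction via the Markov property (citing Pruitt and Taylor), and then conclude by a Chernoff-type bound --- exactly the ``upgrade'' you describe at the end. Your primary route, iterating the strong Markov property at the successive times $\tau_k$ at which the accumulated sojourn crosses $k\,a\,M(F)$, is a correct and more hands-on version of the same idea (the bookkeeping $T(F)=\lceil k a M(F)\rceil-1+T(F)\circ\theta_{\tau_k}$ works out, and $X_{\tau_k}\in F$ on $\{\tau_k<\infty\}$ as you say), and you correctly diagnose that on its own it only yields a decay rate $\sup_{a>1}\log(a)/a=1/e$. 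What the moment/Chernoff version buys is the full range of rates: $\E_y\big(e^{\theta T(F)/M(F)}\big)\le(1-\theta)^{-1}$ for $\theta<1$ gives $\P_x\big(T(F)\ge\lambda M(F)\big)\le(1-\delta)^{-1}e^{-\delta\lambda}$ for every $\delta\in(0,1)$.

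One caveat. Your plan to absorb the prefactor by ``restricting to $\lambda$ larger than a $\delta$-dependent threshold, the complementary small-$\lambda$ range being trivial since the probability is at most $1$'' does not work: for small $\lambda>0$ the target $e^{-\delta\lambda}$ is strictly less than $1$, while the probability need not be. In fact the inequality exactly as stated fails for small $\lambda$ in general: take $F=\{0\}$ for a transient chain started at $0$ that never returns, so that $T(F)=1$ a.s.\ and $M(F)=1$, whence $\P_0\big(T(F)\ge M(F)\big)=1>e^{-\delta}$. So no argument can remove the prefactor uniformly in $\lambda\ge 0$; the honest conclusion of either your iteration or the moment method is $\P_x\big(T(F)\ge\lambda M(F)\big)\le(1-\delta)^{-1}e^{-\delta\lambda}$, equivalently the stated bound for $\lambda$ beyond a $\delta$-dependent threshold. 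This is a defect of the lemma's statement rather than of your argument --- the paper's own proof yields the same prefactor --- and it is immaterial in the applications, where $\lambda\to\infty$.
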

\begin{proof}
\ It suffices to prove that for all $x \in \Z^d$ and all integers
$m \ge 1$,
\begin{equation}\label{Eq:Smom}
\E_x \big( T(F)^m\big) \le m! (M(F))^m.
\end{equation}
This can be verified by using induction and the Markov property in a standard
way (see, e.g., \cite{PT69}). We omit the details.
\end{proof}

Next we estimate $M(F)$. Denote by $V_k(y)=V(y, 2^k)$  the cube in $\Z^d$ centered
at $y$ with side $2^k$. Let $c_{17}$ be a large constant so that
\begin{equation}\label{Eq:c17}
2^k\geq 2c_{11}n^3\q\mbox{and}\q 2^{2k}\geq n^{3d} \q
\mbox{for all }k\geq c_{17}\log n
>0.
\end{equation}
\begin{lemma}\label{Lem:S2}
Assume that $d \ge 3$ and $\PP(\mu_e \ge 1)= 1$.
Then there exists a constant $c_{16}$  such that
$\PP$-a.s. for all $n$ large enough,
and $ c_{17} \log n\le k\le n $ the inequality
\begin{equation} \label{Eq:S2}
\E^\o_x\big( T(V_k(y)) \big) \le c_{16}\,  2^{2k}
\end{equation}
holds uniformly for all $x, y \in V(0,2^{n+1})$.
\end{lemma}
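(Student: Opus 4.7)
The plan is to express the sojourn expectation via the discrete Green's function and then bound the resulting sum by splitting it into a ``trap region'' (where only the uniform bound $\wh{g}^\o \le c_{10}$ is available) and a ``far region'' (where the power-law decay from Lemma~\ref{Lem:green_discrete} applies). By Fubini,
\begin{equation*}
\E^\o_x\big(T(V_k(y))\big) \;=\; \sum_{z \in V_k(y)} \wh{g}^\o(x,z),
\end{equation*}
so the task is to show this sum is $O(2^{2k})$ uniformly for $x,y \in V(0,2^{n+1})$ and $c_{17}\log n \le k \le n$.

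First I would dispose of the case when $y$ is far from $x$, say $\d{x-y} \ge 2^{k+2}$. Then every $z\in V_k(y)$ satisfies $|x-z| \asymp \d{x-y} \ge 2^{k+2}$, which (using \eqref{Eq:n0} and $2^k \ge 2 c_{11} n^3$ from \eqref{Eq:c17}) is bigger than $U_x \wedge U_z$, so the power-law upper bound from Lemma~\ref{Lem:green_discrete}(d) gives
\begin{equation*}
\sum_{z \in V_k(y)} \wh{g}^\o(x,z)
\;\le\; \#V_k(y) \cdot \frac{c_9}{\d{x-y}^{d-2}}
\;\le\; \frac{c\, 2^{kd}}{2^{k(d-2)}} \;=\; c\, 2^{2k}.
\end{equation*}

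In the remaining case $\d{x-y} < 2^{k+2}$, I would enlarge the set: $V_k(y) \subseteq V_{k+3}(x)$, so it suffices to bound $\sum_{z\in V_{k+3}(x)}\wh{g}^\o(x,z)$. Split this sum into the shells $|z-x|\le c_{11} n^3$ and $c_{11} n^3 < |z-x| \le \sqrt{d}\, 2^{k+3}$. On the inner shell, \eqref{Eq:n0} prevents us from using the sharp bound, but there are at most $(2 c_{11} n^3 + 1)^d \le c\, n^{3d}$ sites, and on each $\wh{g}^\o(x,z)\le c_{10}$ by Lemma~\ref{Lem:green_discrete}(e); by \eqref{Eq:c17} we have $n^{3d}\le 2^{2k}$, so the contribution is $\le c\, 2^{2k}$. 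On the outer shell, Lemma~\ref{Lem:green_discrete}(d) gives $\wh{g}^\o(x,z) \le c_9 |x-z|^{-(d-2)}$, and a standard shell count
\begin{equation*}
\sum_{c_{11} n^3 < |z-x| \le \sqrt{d}\, 2^{k+3}} \frac{c_9}{|x-z|^{d-2}}
\;\le\; c \sum_{j=1}^{\lceil\sqrt{d}\, 2^{k+3}\rceil} j^{d-1}\cdot j^{-(d-2)}
\;\le\; c\, 2^{2k}
\end{equation*}
finishes the estimate. Combining the two cases yields \eqref{Eq:S2}.

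The only mildly delicate point is the trap region, where the uniform Green's function bound $c_{10}$ must be summed over as many as $c\, n^{3d}$ lattice points; this is precisely the reason one needs the lower threshold $k \ge c_{17}\log n$ fixed in \eqref{Eq:c17}, which ensures $n^{3d}$ is absorbed into $2^{2k}$. Everything else is a routine Riemann-sum style computation, and the uniformity in $x,y\in V(0,2^{n+1})$ follows because the bound $U_z\le c_{11} n^3$ from \eqref{Eq:n0} holds simultaneously on a slightly larger box for all $n$ large enough.
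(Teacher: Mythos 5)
Your proposal is correct and follows essentially the same route as the paper: represent $\E^\o_x(T(V_k(y)))$ via the discrete Green's function, bound the trap region $\{|z-x|\le c_{11}n^3\}$ by $c\,n^{3d}\le c\,2^{2k}$ using the uniform bound and \eqref{Eq:c17}, and bound the rest by the $|x-z|^{-(d-2)}$ decay. Your preliminary case split on $\d{x-y}\ge 2^{k+2}$ versus $\d{x-y}<2^{k+2}$ is harmless but unnecessary, since the single decomposition $V_k(y)=\big(V_k(y)\cap V(x,c_{11}n^3)\big)\cup\big(V_k(y)\setminus V(x,c_{11}n^3)\big)$ used in the paper already yields the $O(2^{2k})$ bound uniformly in the position of $y$.
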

\begin{proof}\,
As in (\ref{Eq:Hit4}), we have that for any $F \subseteq \Z^d$ and $x \in \Z^d$,
\begin{equation*} %\label{Eq:S4}
\E^\o_x\big( T(F) \big) = \sum_{z \in F}\wh{g}^\o(x, z).
\end{equation*}
Moreover, by \eqref{Eq:n0}
$\max_{x\in V_{n+1}} U_x\leq c_{11}\, n^3.$
It follows from Lemma \ref{Lem:green_discrete} that
\begin{equation*}%\label{Eq:S5}
\begin{split}
\E^\o_x\big( T(V_k(y)) \big)
&\le \sum_{z \in  V(x, c_{11} n^3)} \wh{g}^\o(x, z) +
\sum_{z \in V(y, 2^k)\backslash V(x, c_{11} n^3)} \frac {c_9} {|x-z|^{d-2}}  \\
&\le C\,\big( n^{3d} +  2^{2k}\big)
\le c_{16}\,  2^{2k}.
\end{split}
\end{equation*}
This proves \eqref{Eq:S2}.
\end{proof}

It follows from Lemma \ref{Lem:S2} that $\PP$-a.s. for all $n$ large enough,
$c_{17} \log n\le k \le n  $ and all $x\in V_{n}$,
\begin{equation} \label{Eq:M_F}
M(Q_k(x)) \le c_{16}\,  2^{2k},
\end{equation}
where, recall that, for any $x\in\Z^d$ and any $0\leq k\in \Z$, $Q_k(x)$
is the unique cube in $\ESC_d^k$ that contains $x$.

\subsection{A maximal inequality}

The following lemma estimates the tail probability of the maximal
displacement of VSRW $X$.

\begin{lemma}\label{Lem:Maximal}
Assume that $d \ge 3$ and $\PP(\mu_e \ge 1)= 1$. Then
there exist constants $c_{18}$, $c_{19}$ and $c_{20}$ such that $\PP$-a.s.
for all $n$ large enough ($n\ge n_0$)
the inequality
\begin{equation}\label{Eq:M}
\P_x^\o\bigg(\sup_{0 \le t \le T}|X_t- x| > \lambda \sqrt T\bigg)
\le c_{18}\,\exp\big(-c_{19}\lambda^2\big)
\end{equation}
holds for all $x \in V(0, 2^{n-1})$, $(c_{11} n^3)^2\le T \le 2^{n}$
and $c_{20} \le \lambda < \sqrt{T}/2$.
\end{lemma}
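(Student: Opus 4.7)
\bigskip

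\noindent\textbf{Proof proposal.} The plan is to deduce the maximal inequality from an Ottaviani-type argument, using the strong Markov property together with the quenched heat kernel bounds of Lemma~\ref{Lem:BD}(b). Set $r = \lambda\sqrt{T}$ and let $\tau = \inf\{t \ge 0 : |X_t - x| > r\}$. Then $\{\sup_{t\le T}|X_t-x| > r\} = \{\tau \le T\}$, and by the triangle inequality
\[
\P_x^\o(\tau \le T) \le \P_x^\o\bigl(|X_T - x| > r/2\bigr) + \P_x^\o\bigl(\tau \le T,\ |X_T - X_\tau| > r/2\bigr).
\]
Applying the strong Markov property at $\tau$, the second term is at most $\P_x^\o(\tau \le T)\cdot q$, where
\[
q = \sup_{s \le T,\ y \in \Z^d : |y - x| \le r + 1} \P_y^\o\bigl(|X_s - y| > r/2\bigr).
\]
(Since $X$ is a nearest-neighbor jump process, $|X_\tau - x| \le r+1$.) Therefore, as soon as $q \le 1/2$,
\begin{equation}\label{eq:plan-otta}
\P_x^\o(\tau \le T) \le 2\,\P_x^\o\bigl(|X_T - x| > r/2\bigr).
\end{equation}

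The problem is thus reduced to two tail estimates: (A) $q \le 1/2$, and (B) $\P_x^\o(|X_T - x| > r/2) \le c_{18}\exp(-c_{19}\lambda^2)$. Both follow from Lemma~\ref{Lem:BD}(b). For any $y$ with $|y-x|\le r+1$ one has $|y| \le 2^n$, so by \eqref{Eq:n0} $U_y \le c_{11}n^3$, and the hypotheses $T \ge (c_{11}n^3)^2$ and $r/2 \ge c_{20}\sqrt{T}/2$ ensure $|y-z|\vee\sqrt{s} \ge U_y$ whenever $|z-y|>r/2$, provided $c_{20}$ is chosen large enough. Splitting the sum according to the two cases of Lemma~\ref{Lem:BD}(b):
\[
\P_y^\o\bigl(|X_s-y| > r/2\bigr)
\le \sum_{\substack{|z-y|>r/2 \\ s \ge |z-y|}} c_4 s^{-d/2} e^{-c_5 |z-y|^2/s}
 + \sum_{\substack{|z-y|>r/2 \\ s < |z-y|}} c_4 e^{-c_5|z-y|}.
\]
Comparing each sum with a radial integral yields bounds $\le C e^{-c' r^2/T} + C e^{-c'' r}$ (using $s\le T$ in the first sum, so $r^2/s \ge r^2/T$). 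Since $r = \lambda\sqrt{T}$ with $\lambda \ge c_{20}$, both terms are small, establishing (A). For (B), we use $|y-x|= r/2 > c_{20}\sqrt{T}/2$ and $T \ge r/2$ (which holds because $\lambda < \sqrt{T}/2$), so we are in the case $t \ge |x-y|$ of Lemma~\ref{Lem:BD}(b); the same radial integration gives
\[
\P_x^\o\bigl(|X_T-x|>r/2\bigr) \le c_4 T^{-d/2}\sum_{|y-x|>r/2} e^{-c_5|y-x|^2/T}
\le C e^{-c\lambda^2}.
\]
Combining this with \eqref{eq:plan-otta} delivers the claimed bound with suitable constants $c_{18}, c_{19}$.

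The main obstacle is obtaining the uniform-in-$s$ estimate (A), because the Gaussian heat-kernel bound Lemma~\ref{Lem:BD}(b) degenerates when $s$ is very small (and in particular when $s < |z-y|$, where only the exponential-in-distance bound is available). The resolution is that in the small-$s$ regime, $|z-y| > r/2 \ge c_{20}\sqrt{T}/2$ is already so large that the pure exponential bound $e^{-c|z-y|}$ still suffices; this is precisely why the hypothesis $\lambda \ge c_{20}$ (rather than merely $\lambda \ge 1$) is needed. The lower bound $T \ge (c_{11}n^3)^2$ together with \eqref{Eq:n0} is what allows the heat-kernel estimates to be applied uniformly for all relevant starting points, and the upper restriction $\lambda < \sqrt{T}/2$ ensures that we remain in the $t \ge |x-y|$ regime of Lemma~\ref{Lem:BD}(b) when estimating the endpoint tail.
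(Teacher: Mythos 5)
Your proposal is correct and follows essentially the same route as the paper: the paper also derives a quenched Ottaviani-type inequality via the strong Markov property at the exit time of the ball of radius $M=\lambda\sqrt{T}$ (its $\alpha(T,M/2)\le 1/2$ is your condition (A), its bound \eqref{Eq:M5} is your (B)), and then controls both tails by splitting the sum over lattice points according to the two regimes of Lemma~\ref{Lem:BD}(b), exactly as you do. Your observations about why $T\ge (c_{11}n^3)^2$, $\lambda\ge c_{20}$ and $\lambda<\sqrt{T}/2$ are needed match the role these hypotheses play in the paper's argument.
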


\begin{proof}\, For any $n$ and  $T, a> 0$, let
\begin{equation}\label{Def:alpha}
\alpha (T, a) = \sup_{\d{x}\leq 2^n,\, 0\le t \le T} \P_x^\o \big(|X_t-X_0|> a\big).
\end{equation}
For any $x \in V(0, 2^{n-1}) $ and $ 2\leq M < 2^{n-1}$, we consider the
stopping time
\[
\tau = \inf\{t>0: \d{X_t-  x} > M \}.
\]
For VSRW $X$ started at $x$, we have
$\d{X_{\tau}- x} \le M +1$
and hence $\d{X_\tau}\leq 2^{n-1} + M + 1\leq 2^n. $
The triangle inequality and the strong Markov property imply that
\begin{equation}\label{Eq:Ott0}
\P_x^\o\bigg( |X_T - x| > \frac M 2 \bigg) \ge
\E_x^\o\bigg[ \P_{X_\tau}^\o\bigg(|X_{T-\tau} - X_0| \leq \frac M 2 \bigg)\,
\I_{\{\tau \le T\}}\bigg].
\end{equation}
By the definition \eqref{Def:alpha} and that  $\d{X_\tau}\leq 2^{n}$,
for any pair $(\tau, X_\tau)$,
\begin{equation}\label{Eq:Ott1}
\begin{split}
\P_{X_\tau}^\o\bigg(|X_{T-\tau} - X_0| \leq \frac M 2 \bigg)
&= 1 - \P_{X_\tau}^\o\bigg(|X_{T-\tau} - X_0| > \frac M 2 \bigg)\\
&\ge 1- \alpha (T, M/2).
\end{split}
\end{equation}
Hence we derive the following quenched Ottaviani-type inequality:
For all $x \in V(0, 2^{n-1}) $, $ 2\leq M < 2^{n-1}$
and $T >0$  such that $\a(T, M/2) <1$,
\begin{equation}\label{Eq:Ott}
\P_x^\o\bigg( \sup_{0 \le t\le T} |X_t - x| > M \bigg)
\le
\frac{\P_x^\o\big(|X_T - x| > M/2 \big)} {1 - \a(T, M/2)}.
\end{equation}
This is reminiscent to Lemma 2 in Gikhman and Skorohod \cite[p.420]{GS74}.

Next, recall that $\PP$-\hbox{a.s.}  for all $n\ge n_0$, we have
$\max_{\d{x}\leq 2^n} U_x \le c_{11} n^3$,
see \eqref{Eq:n0}. If $a > c_{11} n^3$, then we can apply (b) of Lemma \ref{Lem:BD} to
deduce that for all $x \in V(0, 2^n)$ and all $0 \le t \le T$ with $t \le a$,
\begin{equation}\label{Eq:M2}
\begin{split}
\P_x^\o\big(|X_t- x|> a\big)&= \sum_{y \in \Z^d:|y-x|> a} p^\o_t(x, y)\\
&\le c_4\sum_{y \in \Z^d:|y-x|> a} \exp\Big(- c_5|y-x|\Big)\\
&\le c_{21}\,e^{-c_{22} a}.
\end{split}
\end{equation}
If $T > a \, (> c_{11} n^3)$ and $a <t \le T$, then it can be verified that
for all $x \in V(0, 2^{n-1})$,
\begin{equation}\label{Eq:M3}
\begin{split}
\P_x^\o\big(|X_t- x|> a\big)
&= \sum_{a <|y-x|\le t} p^\o_t(x, y) + \sum_{ |y-x|> t} p^\o_t(x, y)\\
&\le c_4\sum_{a < |y-x| \le t} t^{-d/2} \exp\Big(- c_5|y-x|^2/t\Big) + c_{21} e^{-c_{22} t}\\
&\le
c_{23} e^{-c_{24} (a/\sqrt{t})^2}
+ c_{21} e^{-c_{22} t}.
\end{split}
\end{equation}

Now we apply  \eqref{Eq:Ott} with $M = \lambda \sqrt T$, where $(c_{11} n^3)^2
\le T < 2^n $ and $c_{20} \le \lambda \le \sqrt T/2$. It follows from \eqref{Eq:M2}
and \eqref{Eq:M3} that we can choose $n$ and the constant $c_{20}$ large enough
such that
\begin{equation}\label{Eq:M4}
\alpha(T,\, M/2)\le \frac 1 2.
\end{equation}
By (\ref{Eq:M3}),
 we have
that for all $\lambda \le \sqrt{T}/2,$
\begin{equation}\label{Eq:M5}
\P_x^\o\Big(|X_T - x| >  \lambda \sqrt T\Big)
\le c'
e^{-c'' \lambda^2}.
\end{equation}
Plugging \eqref{Eq:M4} and \eqref{Eq:M5} into \eqref{Eq:Ott} yields (\ref{Eq:M}).
\end{proof}

It is known that VSRW spends a time of order $n^2$ in the cube $V(0,n)$
(Barlow and \u Cern\' y \cite[p.655]{BC11}). By applying Lemma \ref{Lem:Maximal}
and the Borel-Cantelli lemma we obtain
\begin{cor}\label{cor:LIL} For $\bP$-\hbox{a.e.} $\o$, $\P_0^\o\hbox{-a.s.}$,
\[
\limsup_{T \to \infty}\frac{\max_{0\le t \le T}|X_t|} {\sqrt{T \log \log T}}
\le \frac{1}{\sqrt{c_{19}}}.
\]
\end{cor}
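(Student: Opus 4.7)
The plan is a standard LIL-type argument: apply the Gaussian-tail maximal bound of Lemma \ref{Lem:Maximal} along a geometric subsequence, conclude via the Borel-Cantelli lemma, and then interpolate and optimize over the mesh. Fix parameters $\theta > 1$ and $\varepsilon > 0$, both to be sent to their natural limits at the end, and set
\[
T_k = \theta^k, \qquad \lambda_k = \frac{1+\varepsilon}{\sqrt{c_{19}}}\,\sqrt{\log \log T_k}.
\]

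For $\PP$-a.e.\ $\o$ there is an $n_0(\o)$ beyond which Lemma \ref{Lem:Maximal} applies. I would choose $n = n_k$ to be the smallest integer with $T_k \le 2^{n_k}$, so $n_k \sim k \log_2 \theta$. For all $k$ large, the regime constraints of Lemma \ref{Lem:Maximal} are met at scale $T_k$: the lower bound $(c_{11} n_k^3)^2 \le T_k$ holds because $k^6$ grows polynomially while $T_k = \theta^k$ grows exponentially, and $c_{20}\le\lambda_k < \sqrt{T_k}/2$ holds because $\lambda_k = O(\sqrt{\log k})$ while $\sqrt{T_k} = \theta^{k/2}$. Applying \eqref{Eq:M} at $x=0$ then yields
\[
\P_0^\o\bigg(\max_{0 \le t \le T_k} |X_t| > \lambda_k \sqrt{T_k}\bigg) \le c_{18}\exp(-c_{19}\lambda_k^2) = c_{18}(\log T_k)^{-(1+\varepsilon)^2}.
\]
Since $\log T_k = k\log\theta$ and $(1+\varepsilon)^2 > 1$, these probabilities are summable in $k$, so Borel-Cantelli under $\P_0^\o$ produces a random $k_1(\o)$ beyond which $\max_{0 \le t \le T_k}|X_t| \le \lambda_k\sqrt{T_k}$.

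To convert this subsequential control into a statement for arbitrary $T$, for $T \in [T_{k-1}, T_k]$ with $k \ge k_1(\o)$, monotonicity of the maximum and of $T \mapsto T\log\log T$ gives
\[
\frac{\max_{0\le t \le T}|X_t|}{\sqrt{T\log\log T}} \le \frac{\lambda_k \sqrt{T_k}}{\sqrt{T_{k-1}\log\log T_{k-1}}} = \sqrt{\theta}\cdot \frac{1+\varepsilon}{\sqrt{c_{19}}}\cdot \sqrt{\frac{\log\log T_k}{\log\log T_{k-1}}},
\]
and the last factor tends to $1$. Taking $\limsup_{T\to\infty}$ and then sending $\theta \downarrow 1$ and $\varepsilon \downarrow 0$ along countable sequences yields the stated bound $1/\sqrt{c_{19}}$ on an event of full $\P_0^\o$-measure, for $\PP$-a.e.\ $\o$. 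The only step requiring any real care is bookkeeping the two-sided scale constraint $(c_{11}n^3)^2 \le T \le 2^n$ of Lemma \ref{Lem:Maximal} at each dyadic level; this is routine thanks to the exponential growth of $T_k$ against the polynomial size of $n_k$, and I do not expect any genuine obstacle beyond the usual LIL scaffolding.
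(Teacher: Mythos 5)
Your proof is correct and is exactly the argument the paper intends: the paper states the corollary with only the remark that it follows from Lemma \ref{Lem:Maximal} and the Borel--Cantelli lemma, and your write-up supplies the standard geometric-subsequence/interpolation details, with the scale constraints of Lemma \ref{Lem:Maximal} checked correctly and the constant $1/\sqrt{c_{19}}$ coming out as stated.
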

Consequently, the time that VSRW $X$ spends in the cube $V(0, n)$ is at least
$c n^2/\sqrt{\log \log n}$.

\subsection{A zero-one law }
\begin{lemma}\label{01_law} For any (infinite)
set $A \subset \Z^d$,  for $\PP$-almost every $\o \in \Omega$,
\[
\P^\o_0\big(X_t \in A\ \hbox{ for  arbitrarily large } t >0\big) \in \{0,1\}.
\]
\end{lemma}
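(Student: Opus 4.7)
The plan is to combine L\'evy's upward martingale convergence theorem with a Liouville-type property for bounded harmonic functions of VSRW, the latter being an immediate consequence of the two-sided Gaussian heat kernel estimates in Lemma~\ref{Lem:BD}(b)--(c).

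First I would observe that the event
\[
B := \{X_t \in A \text{ for arbitrarily large } t > 0\}
\]
is invariant under every time shift of the Markov process $X$, so $B \in \bigcap_{s \ge 0} \sigma(X_u: u \ge s)$. Setting $h(\o, x) := \P^\o_x(B)$, the shift-invariance of $B$ together with the Markov property at time $s$ gives
\[
\E^\o_0[\I_B \mid \sF_s] = \P^\o_{X_s}(B) = h(\o, X_s),
\]
and L\'evy's upward convergence theorem applied to $\sF_s = \sigma(X_u: u \le s)$ yields $h(\o, X_s) \to \I_B$, $\P^\o_0$-a.s., as $s \to \infty$.

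The next step is to show that, for $\PP$-almost every $\o$, the bounded function $h(\o, \cdot)$ is constant on $\Z^d$. Since $h(\o, \cdot)$ is harmonic for the VSRW heat semigroup, the triangle inequality gives, for all $x, y \in \Z^d$ and all $s > 0$,
\[
|h(\o, x) - h(\o, y)| \le \|h\|_\infty \sum_{z \in \Z^d} \big| p_s^\o(x, z) - p_s^\o(y, z) \big|.
\]
A Nash/coupling-type argument based on Lemma~\ref{Lem:BD}(b)--(c) and \eqref{Eq:n0} then shows that the right-hand side tends to $0$ as $s \to \infty$: one splits $\Z^d$ into a Gaussian core $\{z: |z - x| \le s^{1/2 + \delta}\}$, on which the matching two-sided bounds force the pointwise ratio $p_s^\o(y, z)/p_s^\o(x, z)$ to tend to $1$ uniformly in $z$, and its complement, whose total mass is exponentially small by the sub-Gaussian upper bound in (b). Consequently $h(\o, \cdot) \equiv h(\o, 0)$, and combining this with the L\'evy step gives $h(\o, 0) = \I_B$ $\P^\o_0$-a.s.; since $\I_B$ takes only the values $0$ and $1$, this forces $\P^\o_0(B) = h(\o, 0) \in \{0, 1\}$.

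The only real obstacle is the Liouville step, specifically the $L^1$-convergence of the heat kernels. The Gaussian lower bound of Lemma~\ref{Lem:BD}(c) only applies when $s$ dominates $U_x^2 \vee |x - z|^{1 + \eta}$, so the ``Gaussian regime'' (matching upper and lower bounds, pointwise ratio tending to $1$) must be handled separately from the large-deviation tail (only the sub-Gaussian upper bound in (b) is available, but the total mass there is already exponentially small). Choosing $\delta$ sufficiently small and $s$ sufficiently large, both contributions can be made arbitrarily small, which closes the argument.
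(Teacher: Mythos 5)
Your overall architecture (martingale convergence plus a Liouville property for bounded harmonic functions) matches the paper's, and the martingale half is fine. The gap is in the Liouville step. You claim that the two-sided Gaussian estimates of Lemma~\ref{Lem:BD}(b)--(c) force the pointwise ratio $p_s^\o(y,z)/p_s^\o(x,z)$ to tend to $1$ uniformly on a core $\{z: |z-x|\le s^{1/2+\delta}\}$. They do not: the upper and lower bounds carry \emph{different} constants both in the prefactor ($c_4$ vs.\ $c_6$) and in the Gaussian exponent ($c_5$ vs.\ $c_7$), so all they yield is
\[
\frac{p_s^\o(y,z)}{p_s^\o(x,z)} \le \frac{c_4}{c_6}\,\exp\Big(c_7\,\tfrac{|x-z|^2}{s}-c_5\,\tfrac{|y-z|^2}{s}\Big),
\]
which on your core (where $|x-z|^2/s$ can be as large as $s^{2\delta}\to\infty$) is not even bounded, and which on a genuinely Gaussian-scale core $|x-z|\le K\sqrt{s}$ is merely bounded by a constant strictly larger than $1$. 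A constant-ratio bound is exactly what a (parabolic or elliptic) Harnack inequality gives, and it does not by itself imply that $\sum_z|p_s^\o(x,z)-p_s^\o(y,z)|\to 0$; for that you need an oscillation/H\"older estimate for caloric functions, i.e.\ the full strength of the parabolic Harnack inequality, which is a theorem in its own right (and must moreover be applied with care here, since (b) and (c) only hold in restricted regimes governed by $U_x$). So the step you describe as ``an immediate consequence'' is the real content, and as written it fails.

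The paper closes this hole differently and more economically: it invokes the elliptic Harnack inequality for the VSRW (Corollary 4.8 of Barlow--Deuschel \cite{BD10}), applied to the nonnegative harmonic function $g=h-\inf h$ on balls $B(x_0,R/2)$ with $R\ge U_{x_0}$ arbitrary. Since the Harnack constant $C$ is uniform in $R$, letting $R\to\infty$ gives $g(y)\ge g(x_0)/C$ for all $y$, contradicting $\inf g=0$ unless $g\equiv 0$; hence $h$ is constant and the martingale step finishes the proof. If you want to keep your heat-kernel route, you would need to first upgrade (b)--(c) to a parabolic Harnack inequality or to H\"older continuity of $p_s^\o(\cdot,z)$, rather than deduce the total-variation convergence directly from the raw two-sided bounds.
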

\begin{proof}
This is a consequence of an elliptic Harnack inequality that the VSRW $X$ satisfies.
More explicitly, define
\[
 h(x)=\P^\o_x\big(X_t \in A\ \hbox{ for  arbitrarily large } t >0\big).
\]
Then $h$ is a harmonic function (with respect to the generator ${\cal L}_V$ in
\eqref{Def:generator}). Let $a=\inf_x h(x)\ge 0$, and let $g= h-a$, so $g \ge 0$
with $\inf_x g(x) =0$.
If $g$ is not identically zero, then there exists $x_0$ such that $g(x_0)>0.$
Now for any $R\geq U_{x_0}$, by Corollary 4.8 in Barlow and Deuschel \cite{BD10},
\[
 g(x_0) \le  \sup_{x\in B(x_0,R/2)} g(x) \le C \inf_{x\in B(x_0,R/2)} g(x).
\]
This holds for all $R\geq U_{x_0}$, so one gets
\[
 g(y) \ge  g(x_0)/C \mbox{ for all  } y,
 \]
a contradiction to that $\inf_x g(x) =0$. So $h$ must be a constant function.
On the other hand,
the martingale convergence theorem tells us that  $\P^\o_0$ almost surely,
\[
h(X_t) \to \I_{ \{X_t \in A\ \hbox{ for  arbitrarily large } t >0\}} \quad \mbox{as  } t\to \infty.
\]
So $h$ is either constantly 0 or constantly 1.
\end{proof}

\subsection{Proofs of Theorems \ref{Th:main} and \ref{Th:main2}}

With the results established above, we are ready to prove Theorems \ref{Th:main}
and \ref{Th:main2}. Even though the arguments are similar to the proofs of
Theorem 7.8 and Theorem 8.3 of Barlow and Taylor \cite{BT92}, several modifications
are needed.

\begin{proof}{\it of Theorem \ref{Th:main}}\,
Firstly we prove that for $\PP$-\hbox{a.e.} $\o$,
the packing dimension of the range
$\dimp {\rm R}\le 2$\, $\P_0^\o$-a.s.
This is done by using a first moment argument.

Let $M_k$ be the total number of semi-dyadic cubes in ${\EuScript C}^k_s$ of order $k$
which are contained in $S_n$ and are hit by $\{X_t, t \ge 0\}$. Since there are at
most $c 2^{(n-k)d}$ semi-dyadic cubes of order $k$ contained in $S_n$ and, by
Lemma \ref{Lem:Hit},
for all $k$ such that $n^{3d/2} \leq 2^k\leq 2^{n-1}$, or equivalently,
$c\log n\leq k\leq n-1$ for some $c>0,$
each of them can be hit by $X$ with probability at most $c 2^{(d-2)(k-n)}$.
Hence
\begin{equation}\label{Eq:prep1}
\E_0^\o \big(M_k\big)\le c\, 2^{2(n-k)},\q\mbox{for all } c\log n\leq k\leq n-1.
\end{equation}
Now we take $\delta >0$ small, and
$\beta = 2 +  \delta$.
It follows from
(\ref{Def:pmeasures}) and \eqref{Eq:prep1} that for $\ep \in (0, 1)$,
\begin{equation}\label{Eq:prep2}
\begin{split}
\E_0^\o\Big(\widetilde{\tau}_\beta\big({\rm R}, S_n, \ep \big)\Big)
&\le  \sum_{k=1}^{c\log n} c2^{(n-k)d}\cdot 2^{(d-2)(c\log n-n)} \cdot
\Big(\frac{2^k}{2^n}\Big)^{\beta}
    + c\sum_{k=c\log n}^{n (1-\ep)} 2^{2(n-k)}\Big(\frac{2^k}{2^n}\Big)^{\beta}\\
&\le c\, 2^{(d-2)c\log n}\cdot 2^{-n \ep \delta}.
\end{split}
\end{equation}
Hence $\widetilde{p}_{\beta}\big({\rm R}, \ep)< \infty$ for all $\ep \in (0, 1)$
\, $\P_0^\o$-a.s.
This and the arbitrariness of $\delta>0$ imply $\dimp {\rm R}\le 2$\, $\P_0^\o$-a.s.

Secondly we prove $\dim {\rm R}\ge 2$ $\P_0^\o$-a.s.
Let $\wh{{\rm R}}$ be the range of the discrete time VSRW $\wh{X}$:
\[
\begin{split}
  \wh{\rm R}&:= \big\{x\in\Z^d: \wh{X}_n = x\quad\mbox{for some }   n\geq 0\big\}\\
  &\ = \{x\in\Z^d: {X}_n = x\quad\mbox{for some }
  n\geq 0\}.
  \end{split}
\]
Since $\wh{\rm R}$ is a subset of R, it suffices to show that $\dim {\wh{\rm R}}\ge 2$
$\P_0^\o$-a.s.
Let $\mu$ be the measure on~$\wh{\rm R}$ which assigns mass 1 to each point of $\wh{\rm R}$.
We claim that there is a constant $c_{27}$
such that $\P_0^\o$-a.s. for all $n$ large enough
\begin{equation}\label{Eq:claim}
\mu\big(Q_k(x)\big) \le c_{27}\, n\, 2^{2k}\qquad \hbox{ for every }
x \in S_n\ \hbox{ and }\, 0 \le k \le n.
\end{equation}
Note that the above inequality holds automatically for all $x \in S_n$ and
$0 \le k \le 1/(d-2)\cdot\log n$. A simple covering argument shows that it
also holds for all $x \in S_n$ and $0 \le k \le c_{17}\log n$, where $c_{17}$ is
the constant in \eqref{Eq:c17}. Hence, in order
to prove (\ref{Eq:claim}), it is sufficient to consider the event
\begin{equation}\label{Eq:claim2}
E_n = \bigg\{ \mu\big( Q_k(x)\big) > \gamma\, n\, 2^{2k}\ \hbox{ for some }
x \in S_n
\hbox{ and } \, c_{17} \log n \le k \le n\bigg\}
\end{equation}
and show that $\sum_{n=1}^\infty \P_0^\o(E_n) < \infty$. In the above, $\gamma>0$
is a generic constant whose value will be chosen later.

Since $\mu\big(Q_k(x)\big) > 0$ implies $Q_k(x)$ is hit by $\wh{X}$,
it follows from Lemma \ref{Lem:Hit} that for all
$c_{17} \log n \le k \le n-3$ (note that by \eqref{Eq:c17}, $2^{c_{17} \log n}
\geq n^{3d/2}$, hence Lemma \ref{Lem:Hit} applies), for every $x \in S_n $,
\[
   \P_0^\o \big(\wh{X}_n\in Q_k(x)\mbox{ for some } n\big)
   \leq    \P_0^\o \big(X_t\in Q_k(x) \mbox{ for some } t\big)
   \leq c 2^{(k-n)(d-2)}.
\]
By enlarging $c$ if necessary we can assume that the above inequality also
holds for  $k=n-2,$ $n-1$ and $n$.
Moreover, restarting at the hitting point (say, $\wh{X}_\tau$,
which necessarily lies in $V_{n+1}$)
and applying Lemma \ref{Lem:Sojourn}
and \eqref{Eq:M_F}, we have that for all $n$
large enough,
$$
\P_{\wh{X}_\tau}^\o \Big( \mu\big( Q_k(x)\big) > \gamma\, n\, 2^{2k}\Big)
\le \P_{\wh{X}_\tau}^\o \Big( T\big( Q_k(x)\big) > \gamma\, n\, 2^{2k}\Big)
\le e^{- \delta \gamma\, n }, \qquad \P^\o_0\hbox{-a.s.},
$$
where $\delta \in (0, 1)$ is a constant. Note that when applying
\eqref{Eq:M_F} we have again used the fact that $k \ge c_{17} \log n$.

It follows from the above and the strong Markov property that
\begin{equation}\label{Eq:claim3}
\begin{split}
\P_0^\o\big(E_n\big) &\le \sum_{ c_{17} \log n \le k \le n}
\P_0^\o \bigg\{\mu\big(Q_k(x)\big) > \gamma n\, 2^{2k}\ \hbox{ for some }
x \in S_n \bigg\}\\
&\le \sum_{ c_{17} \log n \le k \le n} c\, 2^{(n-k)d}\cdot 2^{(k-n)(d-2)}
\cdot e^{-\gamma \, \delta n}\\
&\le c\, e^{-(\gamma\, \delta -2) n},
\end{split}
\end{equation}
for all $n$ large enough. We now take $\gamma > 2/\delta$ so that
$\sum_{n=1}^\infty \P_0^\o(E_n) < \infty$. This and
the Borel-Cantelli lemma prove \eqref{Eq:claim}.

Hence, by \eqref{Eq:claim} and Lemma \ref{Lem:density}, we have
\begin{equation}\label{Eq:LB1}
\nu_2 \big( \wh{\rm R},  S_n \big)
\ge 2^{-d} \,c_{27}^{-1}\, n^{-1} 2^{-2n}\, \mu(S_n)
\end{equation}
for all $n$ large enough. By Lemma \ref{Lem:hit_pt} we have
\begin{equation}\label{Eq:mu_Sn_exp}
\E_0^\o\big( \mu(S_n)\big)
=\sum_{y\in S_n} \P_0^\o \big(\wh{X}_i = y\mbox{ for some } i \geq 0\big)
\ge c_{28}\, 2^{2n}
\end{equation}
for all $n$ large. This, (\ref{Eq:LB1}) and (\ref{Def:Hmeasure}) imply
$\E_0^\o\big(m_2( \wh{\rm R})\big) = \infty.$

Next we prove that $m_2\big( \wh{\rm R}\big) = \infty$\, $\P_0^\o$-a.s.
By (\ref{Eq:Smom}) and \eqref{Eq:M_F}, we have
\begin{equation}\label{Eq:LB2}
\E_0^\o\big( \mu(S_n)^2\big)
\le \E_0^\o\big( T(S_n)^2\big)
\le 2 \Big( M(S_n)\Big)^2
\le c_{29} 2^{4n}.
\end{equation}
Thus, by the Paley-Zygmund inequality (\cite[p.8]{Kahane85}), we obtain
\[
\begin{split}
\P_0^\o \Big( \mu(S_n) \ge \frac{1}{2} c_{28} 2^{2n}\Big)
&\ge \P_0^\o \Big( \mu(S_n) \ge  \frac 1 2 \E_0^\o\big(\mu(S_n)\big)\Big)\\
& \ge \frac 1 4 \frac{\Big(\E_0^\o\big(\mu(S_n)\big)\Big)^2}{\E_0^\o\big( \mu(S_n)^2\big)}\\
&\ge \frac{(c_{28})^2}{4 \,c_{29}} := p.
\end{split}
\]
Moreover, we can replace $\P_0^\o$ by $\P_x^\o$ and use Lemma \ref{Lem:S2} and the
same argument as above to show that the inequality
\begin{equation}\label{Eq:LB4b}
\P_x^\o \Big( \mu(S_n) \ge \frac 1 2 c_{28}\, 2^{2n}\Big) \ge p
\end{equation}
holds uniformly for all $n$ large and for all $x \in V(0, 2^{n-2})$.

We let $n_k = \lfloor \lambda k \log k\rfloor$, where $\lambda>0$ denotes
a constant whose value will be chosen later, and define
a sequence of stopping times by
\begin{equation}\label{Eq:LB5}
\tau_k = \inf\Big\{n> 0:\ \wh{X}_n \notin V\big(0, 2^{n_k}\big)\Big\}, \quad (k\ge 1).
\end{equation}
Note that $|\wh{X}_{\tau_{k+1}}| \ge 2^{n_{k+1}}/2$,
hence by using the strong Markov property and Lemma \ref{Lem:Hit} we obtain
that $\P_0^\o$ almost surely,
\begin{equation}\label{Eq:LB6}
\P_{\wh{X}_{\tau_{k-1}}}^\o\Big(\wh{X}_n \in S_{n_k} \hbox{ for some }\ n \ge \tau_{k+1}\Big)
\le c \bigg(\frac{2^{n_k}}{2^{n_{k+1}}}\bigg)^{d-2}\le \frac 1 {k^2},
\end{equation}
when the constant $\lambda$ is chosen large enough.

Next we consider
\begin{equation}\label{Eq:LB8}
\begin{split}
\P_0^\o\Big(|\wh X_{\tau_{k-1}}|> 2^{n_{k}-3}\Big)
&\le \P_0^\o\Big(|\wh X_{\tau_{k-1}}|> 2^{n_{k}-3},\ \tau_{k-1} \le 2^{2n_{k-1}}  n_{k-1}\Big)\\
&\qquad \qquad \qquad \quad + \P_0^\o\Big( \tau_{k-1} > 2^{2 n_{k-1}}  n_{k-1}\Big).
\end{split}
\end{equation}
Lemma \ref{Lem:Maximal} implies that
\begin{equation}\label{Eq:LB9}
\begin{split}
\P_0^\o\Big(|\wh X_{\tau_{k-1}}|> 2^{n_{k}-3},\ \tau_{k-1} \le 2^{2n_{k-1}}  n_{k-1}\Big)
&\le \P_0^\o\bigg(\sup_{0 \le t \le 2^{2n_{k-1}}  n_{k-1}} |X_{t}|> 2^{n_{k}-3}\bigg)\\
&\le c_{18}\exp\bigg(-c_{19}\, \frac{(k-1)^{2\log(2)\lambda}} { 64\lambda (k-1)\log(k-1)}\bigg) \\
&\le  c_{18}\exp(-c_{29} k)
\end{split}
\end{equation}
for all $k$ large enough when $\lambda$ is chosen large enough.  On the other hand,
by Lemmas~\ref{Lem:Sojourn}~and~\ref{Lem:S2} we have
\begin{equation}\label{Eq:LB10}
\begin{split}
\P_0^\o\Big( \tau_{k-1} > 2^{2 n_{k-1}}  n_{k-1}\Big)
\le \P_0^\o\Big(T\big(V(0, 2^{n_{k-1}})\big) \ge  2^{2 n_{k-1}}  n_{k-1}\Big)
\le c_{18}\exp(-c_{29} k)
\end{split}
\end{equation}
for all $k$ large enough, again when $\lambda$ is chosen large enough. Combining
(\ref{Eq:LB8}), (\ref{Eq:LB9}) and (\ref{Eq:LB10}) yields
\begin{equation}\label{Eq:LB11}
\P_0^\o\Big(| \wh X_{\tau_{k-1}}|> 2^{n_{k}-3}\Big) \le 2c_{18}\exp(-c_{29} k)
\end{equation}
for all $k$ large enough.

By (\ref{Eq:LB4b}) we have that $\P^\o_0$-a.s. on the event
$\big\{| \wh X_{\tau_{k-1}}|\le 2^{n_{k}-3}\big\}$,
\begin{equation}\label{Eq:LB12}
\P_{ \wh X_{\tau_{k-1}}}^\o \Big( \mu(S_{n_{k}})
\ge \frac 1 2 c_{28}\, 2^{2n_k}\Big) \ge p.
\end{equation}
Define
\[
\wh{\rm R}_{k}= \big\{x \in \Z^d: \wh{X}_n = x\ \hbox{ for some }\, \tau_{k-1} \le n <  \tau_{k+1}\big\}\
\]
to be the range of the discrete time VSRW $\wh{X}$ between the times $\tau_{k-1}$ and $\tau_{k+1}$.
Noting that $\wh X_\ell$ does not belong to $S_{n_k}$
when $\ell < \tau_{k-1}$, and using \eqref{Eq:LB6}, the strong Markov property and
\eqref{Eq:LB12}, one obtains that
for $k$ large,
on the event $\big\{| \wh X_{\tau_{k-1}}|\le 2^{n_{k}-3}\big\}$,
\begin{equation}\label{Eq:LB13}
\aligned
\P_{ \wh X_{\tau_{k-1}}}^\o \Big( \mu(\wh{\rm R}_{k} \cap S_{n_{k}}) \ge\frac 1 2 c_{28}\, 2^{2n_k}\Big)
&\ge  p   - \frac{1}{k^2}
\ge \frac p 2.
\endaligned
\end{equation}

Using \eqref{Eq:LB13} and \eqref{Eq:LB11} and applying Lemma \ref{Lem:SLLN} we conclude
that $\P_0^\o$-a.s. the inequality
\[
\mu(\wh{\rm R}_{2k} \cap S_{n_{2k}}) \ge \frac 1 2 c_{28}\, 2^{2n_k}
\]
holds for a sequence $K$ of integers $k$ of lower density at least
$\vep$ for some constant $\vep>0$.
 This
and \eqref{Eq:LB1} imply
\[
\nu_2 \big( \wh{\rm R}_{2k},  S_{n_{2k}} \big) \ge c_{30} n_{2k}^{-1},
\q\mbox{for all } k\in K.
\]
Therefore,
\[
m_2(\wh{\rm R}) \ge c_{30} \sum_{k \in K} \frac 1 {n_{2k}}
\ge c_{31} \sum_{k \in K} \frac 1 {2k \log (2k)}= \infty,
\qquad \hbox{ $\P_0^\o$-a.s.}
\]
This proves that $\dim \wh{\rm R} \ge 2$\, $\P_0^\o$-a.s. and the theorem.
\end{proof}

\begin{proof}{\it of Theorem \ref{Th:main2}}\, It follows from (\ref{Eq:Hittcap}),
(\ref{Eq:Hittcap2}) and (d) of Lemma \ref{Lem:BD} that
for all $n$ large enough,
\begin{equation}\label{Eq:HitCap}
\P^\o_0\Big( X_t \in A\cap S_n \ \hbox{ for some }\, t > 0\Big) \asymp \frac 1 {2^{n(d-2)}}
{\rm Cap}_{g^\o}(A\cap S_n).
\end{equation}
Hence the proof of Theorem in Lamperti \cite{Lam63} gives that
\[
\P^\o_0\Big( X_t \in A \ \hbox{ for  arbitrarily large}\, t > 0\Big) = 0
\]
if and only if $\sum_{n=1}^\infty {2^{-n(d-2)}} {\rm Cap}_{g^\o}(A\cap S_n)< \infty$.
The set $S_n$ here and the $S_n$ in \cite{Lam63} have different meanings, nevertheless
it is straightforward to modify the arguments in \cite{Lam63} to our setting.
The assumption (7) therein should be modified to: there exist
$a,b<\infty$ (depending on the environment)
such that for any $x\in S_n$, $y\in S_{n+m}$ where $n,m\geq b$,
\[
   g^\o(x,y)\leq a2^{-(n+m)(d-2)},\q g^\o(y,x)\leq a2^{-n(d-2)}.
\]
This holds thanks to (d) of Lemma \ref{Lem:BD} and \eqref{Eq:n0}.

Combining this with Lemma \ref{CapHau}, we deduce
\begin{itemize}
\item If $m_{h_1}(A) < \infty$, then $\P^\o_0\big( X_t \in A \
\hbox{ for  arbitrarily large}\, t > 0\big) = 0$.
\item If $m_{h_2}(A) = \infty$, then $\P^\o_0\big( X_t \in A \
\hbox{ for  arbitrarily large}\, t > 0\big) > 0$.
\end{itemize}
These
and Lemma \ref{01_law}
imply the conclusions of Theorem \ref{Th:main2}.
\end{proof}

\section{Proof of Theorem \ref{Th:BTM}}

The proofs of Theorems \ref{Th:main} and \ref{Th:main2} only make use
of Lemmas \ref{Lem:BD} and \ref{01_law}. Bounds on the Green's function
are used for estimating the hitting probabilities in Sections 3.1 and 3.2;
and the upper bound on the transition density $p_t^\o(x, y)$
is used to derive the maximal inequality in Section~3.3.

\begin{proof}{\it of Theorem \ref{Th:BTM}}\
If $a=0$, $\tilde X$ is a time change of the simple random walk on $\Z^d$.
Hence Theorem \ref{Th:BTM} follows from Theorems 7.8 and 8.3 in Barlow and Taylor
\cite{BT92}.

Assume $a \in (0, 1]$. It follows from Lemma 9.1 of  Barlow and  \u Cern\'y
\cite{BC11} and Theorem 6.1 in Barlow and Deuschel \cite{BD10} that,
under the assumption that $\tilde{\PP}(\kappa_x \ge 1) = 1$,
the transition density and the Green's function of
%%%
%the BTM $\tilde X$
the VSRW $\tilde X$
% associated with the conductances $\{\tilde\mu_{xy}\}$
%%%
satisfy Lemma \ref{Lem:BD}.
Moreover, by Lemma~9.1 and Proposition 3.2 in Barlow and  \u Cern\'y \cite{BC11},
$\tilde X$ also enjoys an elliptic Harnack inequality and hence a zero-one law as in
Lemma \ref{01_law} holds as well. Therefore, the proof of Theorem \ref{Th:BTM}
is the same as those of Theorems \ref{Th:main} and \ref{Th:main2}.
\end{proof}

\setcounter{section}{0}
\setcounter{subsection}{0}
\setcounter{equation}{0}
\renewcommand{\thesection}{\Alph{section}}
\renewcommand\thesubsection{\thesection.\arabic{subsection}}
\renewcommand{\theequation}{\thesection.\arabic{equation}}%

\section{Appendix: Proof of Lemma \ref{Lem:Cap}}

We first list some known facts about discrete time Markov chains with
discrete space $E$. Suppose $\{X_i,\, i\ge 0\}$ is such a process, for
any finite set $A$ in the state space, let
\begin{equation}\label{eq:hit_return}
  T_A=\inf\{i\geq 0: X_i \in A\},\q\mbox{and}\q S_A=\inf\{i> 0: X_i \in A\}
\end{equation}
be the first hitting time and the first return time of $A$ respectively. Then by
the last-exit decomposition, see, e.g.,
Proposition 3.5 in Revuz \cite[p.57]{Revuz84}
\begin{equation*}%\label{eq:hit_prob}
\P_x(T_A<\infty) = \sum_{y\in A} g(x,y) \P_y(S_A = \infty),\q\mbox{ for all } x,
\end{equation*}
where $g(x,y)=\sum_{i=0}^\infty p_i(x,y)$ is the Green's function.
Moreover, if we define
\[
  {\rm Cap}(A)=\sum_{y\in A} P_y(S_A = \infty)
\]
to be the capacity of $A$, then it satisfies that Revuz \cite[Exercise 4.13 on p.64]{Revuz84}
\begin{equation}\label{eq:capacity_max_A}
{\rm Cap}(A) = \max\Big\{ \sigma(A): \, \sigma \hbox{ is a
measure on }\, A \hbox{ such that }\, \max_{x \in A}
(g\sigma)(x) \le 1 \Big\}.
\end{equation}

We now prove Lemma \ref{Lem:Cap}. For any $n$, define discrete time
Markov chain $\{X^{(n)}_i:=X_{i/2^n},\, i=0,1,\ldots\}$.
It has transition density $p^{(n)}_i(x,y)=p_{i/2^n}(x,y)$, and Green's function
$g^{(n)}(x,y)=\sum_{i=0}^\infty p_{i/2^n}(x,y).$
Now for any finite set $A$ in the state space $E$, define the hitting
time and return time $T_A^{(n)}$ and $S_A^{(n)}$ similarly as in
\eqref{eq:hit_return} for the process $X^{(n)}$. We then have that
\begin{equation}\label{eq:hit_prob}
\P_x\Big(T_A^{(n)}<\infty\Big) = \sum_{y\in A} g^{(n)}(x,y) \P_y\Big(S_A^{(n)}
= \infty\Big),\q\mbox{for all } x.
\end{equation}
Moreover, ${\rm Cap}^{(n)}(A)=\sum_{y\in A} \P_y(S_A^{(n)} = \infty)$ satisfies
\eqref{eq:capacity_max_A} with $g$ replaced by $g^{(n)}$.

We now let $n$ go to $\infty$. By the right continuity of the process $\{X_t, t \ge 0\}$,
\begin{equation}\label{eq:prob_hit_conv}
\P_x(T_A^{(n)}<\infty) \uparrow \P_x(T_A<\infty),\mbox{ where }
 T_A=\inf\{t\geq 0: X_t \in A\}.
\end{equation}
%%%%
%Next by using conditions (i) and (ii) one can verify that
%\begin{equation}\label{eq:conv_Green}
%   \frac{1}{2^n} g^{(n)}(x,y) \to g(x,y)=\int_0^\infty p_t(x,y)\,dt, \q\mbox{for all } x, y \in E.
%\end{equation}
%Moreover, by condition (iii),  for any $y\in A$ and $n \ge 1$, $2^n\P_y(S_A^{(n)}=
%\infty)\leq 2^n \P_y(X_{1/2^n}\neq y)$ is bounded hence must admit a subsequence converging to some limit,
%say $b(y).$
Moreover, by condition (ii) and Lemma 3.6.1 in Norris \cite{Norris98}, for any
$x, y \in E$ and any $0\leq s<t<\infty$,
\[
|p_s(x,y) -p_{t}(x,y)|\leq 1-e^{-q_x (t-s)}=O(t-s).
\]
Combining this with condition (i) one can verify that
\begin{equation}\label{eq:conv_Green}
   \frac{1}{2^n} g^{(n)}(x,y) \to g(x,y)=\int_0^\infty p_t(x,y)\,dt, \q\mbox{for all } x, y \in E.
\end{equation}
Furthermore, by condition (ii) again, for any $y\in A$ and $n \ge 1$,
%%%
%$2^n\P_y(S_A^{(n)}=\infty)\leq 2^n \P_y(X_{1/2^n}\neq y)$ is bounded,
\[
2^n\P_y(S_A^{(n)}=\infty)\leq 2^n \P_y(X_{1/2^n}\neq y) \leq q_y < \infty,
\]
%%%
hence $\{2^n\P_y(S_A^{(n)}=\infty):n\geq 1\}$ must admit a subsequence converging to some limit,
say $b(y).$
%%%%
By \eqref{eq:hit_prob}, \eqref{eq:prob_hit_conv} and \eqref{eq:conv_Green} the $b(y)$'s
must satisfy
\[
\P_x(T_A<\infty)=\sum_{y\in A} g(x,y) b(y),\q\mbox{for all } x \in E.
\]
Thus we have explicitly built a function $b(y)$ which solves \eqref{Eq:Hittcap}.
Moreover, by the uniqueness of Riesz decomposition (see, for example, Syski
\cite[Theorem 1, p.165]{Syski92}), the solution to the above equation is unique,
and hence we conclude that the whole sequence $\{2^n\P_y(S_A^{(n)}=
\infty)\}$ must converge to $b(y)$.
We then have that
\[
{\rm Cap}(A)=\lim_{n\to \infty} 2^n {\rm Cap}^{(n)}(A).
\]
That it satisfies \eqref{eq:capacity_max} follows from the above
convergence and that ${\rm Cap}^{(n)}(A)$ satisfies \eqref{eq:capacity_max_A} .

Finally  the lemma applies to the VSRW, because condition (i) holds
thanks to (a) of Lemma~\ref{Lem:BD}, and (ii) holds by the definition of VSRW.
\qed

\section*{Acknowledgments}
We thank Martin Barlow for the proof of Lemma \ref{01_law}.
We also thank the editor and an anonymous referee for their helpful and
thoughtful comments and suggestions.

\bigskip

%\begin{small}
\bibliographystyle{abbrv}

%\end{small}
\bigskip

\medskip
\textsc{Yimin Xiao:} Department of Statistics and Probability,
A-413 Wells Hall, Michigan State
University, East Lansing, MI 48824, U.S.A.\\
E-mail:  \texttt{xiao@stt.msu.edu }\\[1mm]

\medskip

\textsc{Xinghua Zheng:}   Department of ISOM,
Hong Kong University of Science and Technology, Clear Water Bay, Kowloon, Hong Kong\\
E-mail: \texttt{xhzheng@ust.hk}

\end{document}